\newtheorem{theorem}{Theorem}
\newtheorem{lemma}{Lemma}
\newtheorem{prop}{Proposition}
\newtheorem{corollary}{Corollary}
\newtheorem{conjecture}{Conjecture}
\newtheorem{remark}{Remark}
\begin{document}

	\title{Averages of quadratic twists of long Dirichlet polynomials}
	
	\author{Brian Conrey}
	\address{American Institute of Mathematics, 600 East Brokaw Rd, San Jose, CA 95112, USA and School of Mathematics, University of Bristol, Bristol BS8 1TW, UK}
	\email{conrey@aimath.org}
	\author{Brad Rodgers}
	\address{Queen's University, Kingston, ON Canada K7L 3N6}
	\email{brad.w.rodgers@gmail.com}
	
	\thanks{This project started at an AIM Square in 2019. We thank the members of that SQuaRE, 
		Alexandra Florea, Jon Keating, Julio Andrade, Chantal David and Matilde Lalin for 
		useful conversations about this work, and a referee for comments and corrections. We gratefully acknowledge support of the NSF FRG grant DMS 1854398. BR also acknowledges support of an NSERC grant.}
	
	\date{\today}
	
	\begin{abstract} 
		We investigate 
		averages of long Dirichlet polynomials twisted by Kronecker symbols
		and we compare our result with the recipe of Conrey-Farmer-Keating-Rubinstein-Snaith. 
		We are able to compute these averages in the case that the length of the polynomial is a power less than 2 of the basic scaling parameter on the assumption of the Lindel\"of Hypothesis for $L$-functions of quadratic characters, and we show that the answer is consistent with this recipe. This corresponds, in terms of the recipe, to verifying 0- and 1-swap terms.
	\end{abstract}

	\maketitle
	
	\section{Introduction}
	\label{sec:intro}
	
	\subsection{Main results}
	\label{subsec:mainresults}
	
	The purpose of this note is to estimate the averages of long Dirichlet polynomials twisted by Kronecker symbols, a topic closely connected to moments of $L$-functions over a quadratic family. Our main result is Theorem \ref{thm:main} and gives a power saving asymptotic estimate for
	\begin{equation}
		\label{eq:quad_sum}
		\sum_{d} \Psi(d/D) P_A(\chi_d)
	\end{equation}
	where the sum over $d$ is over a family of primitive quadratic characters with conductor $d$ and
	$$
	P_A(\chi_d) = \sum_n W(n/N) \frac{\tau_A(n) \chi_d(n)}{\sqrt{n}}
	$$
	is a Dirichlet polynomial of length roughly $N$ and $\tau_A$ is a weighted multiple divisor function (see \eqref{eq:tau_A_def} below). Here $\Psi$ and $W$ are smooth weights. We obtain an answer for 
	$$
	N = D^\eta\quad \textrm{for}\; 1 < \eta < 2.
	$$ 
	In fact we will also compute averages which have been twisted in the sum over $d$. As we will see the answer changes depending on the parameter $\eta$ and can be seen as a confirmation of the recipe of \cite{CFKRS} in a restricted range. We will make use of the methods in \cite{Sou}, in particular a critical role will be played by a Poisson summation formula proved there and recalled in Section \ref{subsec:poisson} of this paper.
	
	\subsection{Background}
	\label{subsec:background}
	
	The recipe of \cite{CFKRS} gives very precise conjectures for moments of products of shifted L-functions averaged over a natural family. 
	The so-called shifts are represented by a set $A=\{\alpha_1,\dots,\alpha_k\}$ of complex numbers with small real parts (in the case of orthogonal or symplectic families) and by two such sets $A$ and $B$ in the
	case of a unitary family.  
	The structure of the recipe is an expression of these shifted moments as a sum over ``swapped'' terms where the swaps are a shorthand way of 
	describing the replacement of  
	subsets of $A$ by all of the negatives of the elements of the subset (in the case of orthogonal or symplectic) or replacing  each subset of $A$ by
	all of the negatives of the elements of an equal sized subset of $B$ in the case of a unitary family.
	The size of the subset used for swapping is somehow  an indication of the complexity 
	of the terms, or at least the difficulty in explicating those terms. So, the 0-swap terms (with the empty subset of $A$ or of $A$ and $B$)
	represent the most obvious contributions to the moment, namely the diagonal, which basically arise from the $m=n$ term 
	in a unitary family, the $m=\square$ term in a family where the characters are quadratic characters, or the $m=1$ in a family of 
	cusp forms where the harmonic detector is the Petersson formula. (In this case, it depends on how the moment is attacked.)
	The 1-swap terms (where the swapping sets have cardinality 1) are harder to uncover. They arise from Soundararajan's Poisson formula in the case of
	quadratic characters (see \cite{Sou} and \cite{Flo1});  they arise from the asymptotic large sieve (see \cite{CoIwSou,ChLi} and also \cite{RoSou});
	they arise from the Kloosterman sum terms in the Petersson formula in the case of averaging cusp forms (see \cite{KoMiVa}); 
	and they arise from an assumption about divisor correlations in the case of the Riemann zeta-function (see \cite{CoGh,CoGo})
	
	As already mentioned, one is interested in averaging  products of shifted L-functions; see for instance \cite{AnKe, Flo1, Flo2, Flo3, DiWh, Di, Son, Dj} for work on families of quadratic $L$-functions in particular.
	However, only low moments seem possible to consider using current techniques. 
	Another possible way to make progress is to consider an average of a Dirichlet polynomial which truncates the Dirichlet series for a product of shifted L-functions.
	Indeed, it is exactly because low moments themselves can be closely approximated by relatively short Dirichlet polynomials of this sort that we are able to compute them.
	
	In computing averages of Dirichlet polynomials as opposed to moments, we will see below that comparison with \emph{portions} of the moment recipe are possible even for high moments. It may be necessary to assume the Lindel\"{o}f Hypothesis to bound 
	the error terms in this case. Such a  use of Lindel\"{o}f may seem contrary to the long-term goals of proving the moment conjectures; but at this early stage of development it is important to see what is true in order to be guided how to proceed. 
	The family under study will have a basic parameter which is essentially the analytic conductor of the family. The truncation parameter,  called the ``length'' of the polynomial, is the power of the analytic conductor.  In general, if the length is $<1$, only the diagonal or 0-swap terms will contribute.
	
	If the length is  between 1 and 2, then we are in the situation described in the previous paragraph. It is this situation we will examine for the family of quadratic characters, confirming the appearance of 0- and 1-swap terms in the aforementioned recipe. In recent work, Hamieh and Ng have used this perspective and Hardy-Littlewood type conjectures for divisor functions to study mean values of Dirichlet polynomials averaged in the $t$-aspect \cite{HaNg}. Likewise in work concurrent with this paper, Baluyot and Turnage-Butterbaugh have pursued a similar strategy for Dirichlet polynomials averaged over all primitive characters using the asymptotic large sieve \cite{BaTu}, and Conrey and Fazzari have studied the analogous problem for long Dirichlet polynomials with modular coefficients using Kloosterman sums and the Petersson formula \cite{CoFaz}. In an appendix to her thesis \cite{Flo1} Florea has used closely related ideas to study off-diagonal terms arising from an application of Soundararajan's method to the problem of moments of quadratic L-functions in function fields, likewise making a comparison between the polar structure of these terms and that of 1-swap terms from the CFKRS recipe.
	
	In forthcoming work Baluyot and Conrey use the results in this paper to heuristically study swaps larger than $0-$ or $1-$ over the family of quadratic $L$-functions, a symplectic analogue of heuristics in \cite{BaCo,CoKeI, CoKeII, CoKeIII, CoKeIV, CoKeV} for a unitary family.
	
	Let us also note that while in press, \v{C}ech has shown in \cite{Ce} that Theorem \ref{thm:main} of this paper can also be elegantly deduced from the theory of multiple Dirichlet series, and a function field version of Conjecture \ref{conj: recipe} has now been proved in work of Bergström-Diaconu-Petersen-Westerland and Miller-Patzt-Petersen-Randal-Williams \cite{BeDiPeWe, MiPaPeRaWi}.

	\section{Setup and results}
	\label{sec:setup}
	
	\subsection{Long Dirichlet polynomials}
	\label{sec:long_dirichlet}
	
	We consider averages of long Dirichlet polynomials over a family of quadratic characters and twisted by a quadratic character. Set
	\begin{eqnarray*}
		\mathcal{S}_A(D;\, \ell):= \sum_{d\in\mathcal D}\Psi(d/D)\sum_{n\geq 1}W(n/N)  \frac{\tau_A(n) \chi_d(n\ell)}{\sqrt{n}},
	\end{eqnarray*}
	with $N = D^\eta$ for $1 < \eta < 2$,
	and where $A$ is a (multi)set of complex numbers and $\tau_A$ is defined implicitly by 
	\begin{equation}
		\label{eq:tau_A_def}
		Z_A(s)=\prod_{\alpha \in A} \zeta(s+\alpha) =\sum_{n=1}^\infty \frac{\tau_A(n)}{n^s}.
	\end{equation}
	Here $\mathcal D$ is a family of primitive quadratic Dirichlet characters (i.e. Kronecker symbols, or characters of quadratic field extensions of $\mathbb Q$). We will say more about our choice of family below.
	The test functions $\Psi$ and $W$  are $C^\infty$ functions on $\mathbb R^+$ which can be thought of as approximating the characteristic function of $[1,2]$ and of $(0,1]$ respectively. Finally $\ell$ is a positive integer which is not too large (indeed the reader may wish to set $\ell = 1$ on first reading).
	
	\begin{remark}
	In \eqref{eq:tau_A_def} and in what follows, we adopt the convention that if the (multi)set $A$ has elements listed multiply, they appear multiple times in the product. Thus if $A = \{\alpha_1,...,\alpha_k\}$ our convention will be that $\prod_{\alpha \in A} \zeta(s+\alpha) = \prod_{i=1}^k \zeta(s+\alpha_i)$. In general we allow the collections of complex numbers we consider to be multisets (but continue to refer to them as sets where there is no chance of confusion).
	\end{remark}

	Our goal is to express $\mathcal S_A(D;\ell)$  in terms of the 0- and 1-swap terms predicted by the recipe of CFKRS; we explain this prediction below.
	
	\subsection{Quadratic characters and L-functions}
	\label{subsec:quadratic_char}
	
	Recall that a quadratic character modulo $q$ is a non-trivial character $\chi$ such that $\chi^2$ is a trivial character modulo $q$. We are interested in primitive quadratic characters (in this case a character modulo $q$ has conductor $q$) and these have the following description (see \cite[Ch. 9]{MV} for proofs of these claims): 
	\begin{itemize}
		\item for any odd prime $p$ there is a unique primitive character of conductor $p$ given by 
		\begin{equation}
			\label{eq:quadchar1}
			n \mapsto \left( \frac n p \right)
		\end{equation}
		where $( \tfrac \cdot \cdot )$ is the Legendre symbol, and there are no primitive quadratic characters of conductor $p^j$ for $j \geq 2$.
		
		\item There is no primitive quadratic character of conductor $2$. There is one primitive quadratic character of conductor $4$ given by
		\begin{equation}
			\label{eq:quadchar2}
			n \mapsto 
			\begin{cases} 
				+1 & \mbox{if $n\equiv 1 \bmod 4$}\\
				-1 & \mbox{if $n\equiv 3 \bmod 4$}\\
				0 & \mbox{if $n$ is even.}
			\end{cases}
		\end{equation}
		
		\item There are two primitive quadratic characters of conductor $8$, given by the functions
		\begin{equation}
			\label{eq:quadchar3}
			n \mapsto 
			\begin{cases} 
				+1 & \mbox{if $n\equiv \pm 1 \bmod 8$}\\
				-1 & \mbox{if $n\equiv \pm 3 \bmod 8$}\\
				0 & \mbox{if $n$ is even,}
			\end{cases}
		\end{equation}
		and
		\begin{equation}
			\label{eq:quadchar4}
			n \mapsto 
			\begin{cases} 
				+1 & \mbox{if $n\equiv 1,3 \bmod 8$}\\
				-1 & \mbox{if $n\equiv 5,7 \bmod 8$}\\
				0 & \mbox{if $n$ is even.}
			\end{cases}
		\end{equation}
		\item For $j \geq 4$, there are no primitive quadratic characters of conductor $2^j$.
	\end{itemize}
	
	Furthermore if $q = q_1 q_2$ with $q_1$ and $q_2$ coprime and $\chi_1$ and $\chi_2$ primitive quadratic characters of conductors $q_1$ and $q_2$ respectively, then $\chi_1\chi_2$ is a primitive quadratic character of conductor $q$ and moreover all primitive quadratic characters of conductor $q = q_1 q_2$ can be factored this way. Therefore a complete list of primitive quadratic characters is formed by multiplying various combinations of the characters described in \eqref{eq:quadchar1}-\eqref{eq:quadchar4} together.
	
	There is a convenient parameterization of such characters in terms of the Kronecker symbol (also written $(\tfrac \cdot \cdot )$ and which coincides with the Legendre symbol where the latter is defined). Recall that an integer $d$ is said to be a fundamental discriminant if $d=1$ or $d$ is the discriminant of a quadratic field. It is known that an integer $d$ is a fundamental discriminant if and only if it is either a squarefree integer congruent to $1$ modulo $4$, or $4$ times a squarefree integer congruent to $2$ or $3$ modulo $4$. (See e.g. \cite[Ch. 13.1]{IrRo}.)
	
	We have the following parametrization of primitive quadratic characters. For $d$ a fundamental discriminant set and $d\neq 1$ set
	$$
	\chi_d(n) = \left( \frac d n \right).
	$$
	If $d = 1$ set $\chi_1(n) = 1$. $\chi_d$ is a primitive quadratic character modulo $|d|$, and moreover one may show that all primitive real characters are of this form. (Thus is may be seen that the character in \eqref{eq:quadchar1} is $\chi_p$ if $p \equiv 1 \bmod 4$ and $\chi_{-p}$ if $p \equiv 3 \bmod 4$; the character in \eqref{eq:quadchar2} is $\chi_{-4}$; the character in \eqref{eq:quadchar3} is $\chi_8$; and the character in \eqref{eq:quadchar4} is $\chi_{-8}$.)
	
	A critical point is that
	$$
	\chi_d(-1) = \begin{cases}
		+1 & \mbox{if $d > 0$} \\
		-1 & \mbox{if $d < 0$}.
	\end{cases}
	$$
	In other words $\chi_d$ is an even character if $d > 0$ and $\chi_d$ is an odd character if $d < 0$. 
	
	Each fundamental discriminant $d$ is the discriminant of the field $\mathbb{Q}(\sqrt{d})$, and we have the Dedekind zeta-function of a quadratic field is given by
	$$
	\zeta_{\mathbb{Q}(\sqrt{d})} = \zeta(s) L(s,\chi_d)
	$$
	where $\zeta(s)$ is the Riemann zeta-function and 
	$$
	L(s,\chi_d) = \sum_{n=1}^\infty \frac{\chi_d(n)}{n^s}.
	$$
	These $L$-functions have functional equations which depend on the conductor of the characters and on the parity of the character. The functional equation is given by
	$$
	L(1-s,\chi_d) = |d|^{s-1/2} X_+(1-s) L(s,\chi_d)
	$$
	for even characters (i.e. $d > 0$) and
	$$
	L(1-s,\chi_d) = |d|^{s-1/2} X_-(1-s) L(s,\chi_d)
	$$
	for odd characters (i.e. $d < 0$) where
	$$
	X_+(1-s) = 2^{1-s} \pi^{-s} \cos(\tfrac{\pi s}2) \Gamma(s)
	$$
	$$
	X_-(1-s) = 2^{1-s} \pi^{-s} \sin(\tfrac{\pi s}2) \Gamma(s).
	$$
	We will write
	$$
	X_d(1-s) = \begin{cases}
		|d|^{s-1/2} X_+(1-s) & \mbox{if $d > 0$}\\
		|d|^{s-1/2} X_-(1-s) & \mbox{if $d < 0$}.
	\end{cases}
	$$
	From Stirling's formula, the reader can verify that
	\begin{equation}
		\label{eq:stirling}
		|X_d(s)| \asymp (|d|(|t|+2))^{1/2-\sigma},
	\end{equation}
	for $s = \sigma + it$ in the vertical strip $1/4 \leq \sigma \leq 3/4$ say.

	\subsection{The recipe and Dirichlet polynomials}
	\label{subsec:the_recipe}
	
	We are interested in averaging products of these (shifted) L-functions at the central point 1/2.
	The recipe of CFKRS gives a conjectural answer to such an average  over a family
	of real characters.  
	The families we consider  are  all of the 
	fundamental discriminants  $d$ that are $\equiv a \bmod q$ for some fixed $a$ and $q$;  i.e. they are all the fundamental discriminants contained in a fixed arithmetic progression.  
	Let $\mathcal D$ denote such a family. Let $D$ be a (large) parameter and let $A$ be a set of complex numbers with  real parts  $\ll \frac{1}{\log D}$  and  imaginary parts
	$\ll D$. Let 
	$$\mathcal {L}_A(s, \chi_d)=\prod_{\alpha\in A} L(s+\alpha,\chi_d)=\sum_{n=1}^\infty \frac{\tau_A(n)\chi_d(n)}{n^s}.$$
	Let $\Psi  \in C^\infty[1,2]$ be a smooth function supported on the interval $[1,2]$. We consider the average
	$$ \mathcal M_A(D;\ell):= \sum_{d\in \mathcal D} \Psi\left(\frac dD\right) \mathcal L_A(1/2,\chi_d) \chi_d(\ell).$$
	The conjectural answer for this average depends on the auxiliary function
	$$\mathcal B(A):=\sum_{ n=\square}^\infty \frac{\tau_A(n)}{\sqrt{n}}$$
	and, more precisely, on 
	$$
	\mathcal B^{(d)}(A;\ell):=\sum_{(n,d)=1\atop n\ell=\square}^\infty \frac{\tau_A(n)}{\sqrt{n}}.
	$$
	$\mathcal{B}(A)$ and $\mathcal{B}^{(d)}(A;\ell)$ are defined by these sums for $\Re\, \alpha_i  > 0$ for all $i$ and are defined by analytic continuation for $\Re\, \alpha_i > -1/4$ for all $i$ (see Lemma \ref{lem:B_cont} below).
	
	\begin{conjecture}(Recipe)
		\label{conj: recipe}
		Given a set $A$ of complex numbers with real parts $\ll 1/\log D$ and imaginary parts $\ll D$, there is a $\delta>0$ (depending on the number of elements in the set $A$) such that 
		\begin{eqnarray*}
			\mathcal M_A(D;\ell)= \sum_{d\in \mathcal D\atop (d,\ell)=1} \Psi\left(\frac dD\right) \sum_{U\subset A} \prod_{u\in U} X_d(1/2 +u) \mathcal B^{(d)}(A-U+U^-;\ell)+O(\ell^{1/2}D^{1-\delta})
		\end{eqnarray*}
		where $A-U+U^-$ is the set $\{\alpha\in A: \alpha\notin U\}\cup \{ -u:u\in U\}$.
	\end{conjecture}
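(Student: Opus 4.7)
The plan is to derive the formula by executing the CFKRS recipe in this symplectic setting, the key arithmetic input being the orthogonality of the quadratic family. First I would apply the approximate functional equation to each factor $L(1/2+\alpha,\chi_d)$ with $\alpha\in A$. Using $L(1/2+\alpha,\chi_d)=X_d(1/2+\alpha)L(1/2-\alpha,\chi_d)$, I would write each factor as a ``principal'' Dirichlet sum plus $X_d(1/2+\alpha)$ times a ``dual'' Dirichlet sum, each with a smooth cutoff. Expanding the product over $\alpha\in A$ then produces $2^{|A|}$ pieces indexed by subsets $U\subset A$ recording which factors have been swapped to dual form. Discarding the cutoff functions (the usual first heuristic of the recipe) and replacing each $u\in U$ by $-u$ inside the Dirichlet series converts $\tau_A$ into $\tau_{A-U+U^-}$, and after inserting the twist $\chi_d(\ell)$ the $U$-piece becomes
$$\prod_{u\in U}X_d(1/2+u)\,\sum_n \frac{\tau_{A-U+U^-}(n)\chi_d(n\ell)}{\sqrt n}.$$

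Next I would interchange the order of summation and average over $d\in\mathcal D$ with the weight $\Psi(d/D)$. The recipe's central heuristic is that, for each $n$ coprime to $d$, the weighted average of $\chi_d(n\ell)$ over the arithmetic progression defining $\mathcal D$ contributes a main term proportional to $\sum_d \Psi(d/D)$ precisely when $n\ell$ is a perfect square, and is negligibly oscillatory otherwise. Retaining only the $n\ell=\square$ condition, recognizing the resulting sum as $\mathcal B^{(d)}(A-U+U^-;\ell)$, and bringing $\sum_d$ back outside, yields exactly the stated main term.

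The crux is quantifying the error $O(\ell^{1/2}D^{1-\delta})$. Two ingredients are needed: (a) control over the discarded AFE cutoffs, which can be handled by a standard contour shift exploiting the rapid decay of the cutoff in its Mellin variable; and more seriously (b) control over the off-square contribution $\sum_d \Psi(d/D)\chi_d(m)$ for $m$ not a perfect square. Handling (b) requires the Poisson summation formula for quadratic characters recalled earlier in the paper (following \cite{Sou}), which reshapes the off-square sum into a dual sum whose size is controllable only in a restricted range of parameters linked to the effective Dirichlet-polynomial length. This is the step that confines the rigorous treatment in the paper to the range $1<\eta<2$ of Theorem~\ref{thm:main}, where only the $0$- and $1$-swap portions are verified; full control over larger subsets $U$ and an unconditional treatment of the resulting dual sums lies beyond current technology, which is why the statement above remains a conjecture rather than a theorem.
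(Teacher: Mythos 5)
This statement is a conjecture: the paper offers no proof of it, only the heuristic recipe reasoning of \cite{CFKRS} that motivates it, and your proposal reproduces essentially that same heuristic (approximate functional equation for each shifted factor, expansion into $2^{|A|}$ swap pieces indexed by $U\subset A$, retention of the square terms $n\ell=\square$ in the $d$-average, discarding the off-square oscillation), while correctly acknowledging that the error bound $O(\ell^{1/2}D^{1-\delta})$ is exactly the unproven content. That is the right reading of the situation, and it matches the paper, which only \emph{verifies} the $0$- and $1$-swap portion of this prediction (Theorem \ref{thm:main}, under Lindel\"of, for $N=D^{\eta}$ with $1<\eta<2$) rather than proving the conjecture. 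One small point worth making explicit: when you say the $d$-average of $\chi_d(n\ell)$ ``contributes a main term precisely when $n\ell$ is a square,'' the classical recipe records this square-detection together with an arithmetic factor $\prod_{p\mid n\ell}\frac{p}{p+1}$; in the paper's formulation (see Remark 1 and the identity \eqref{eq:an_explanation}) that factor is not present in $\mathcal B^{(d)}$ but is instead absorbed into the outer sum over $d$ by keeping the coprimality condition $(n,d)=1$ inside $\mathcal B^{(d)}(A-U+U^-;\ell)$, so your heuristic should be phrased as: replace $\chi_d(n\ell)$ by $1$ when $n\ell$ is a square coprime to $d$, and by $0$ on average otherwise. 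With that adjustment your derivation is exactly the intended genesis of the conjectured formula as stated.
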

	
	\begin{remark}
	We adopt the notational convention that the sum over $U$ above is a sum over all subsets of $A$, including the empty set. In the case that $U = \emptyset$, the summand becomes $\mathcal B^{(d)}(A;\ell)$. 
	\end{remark}
	
	\begin{remark}
		It is important to note that this statement of the recipe for this family differs in a subtle but important way from previous statements. In previous incarnations,
		the fundamental Dirichlet series had the form
		$$ \sum_{n=1\atop n=\square}^\infty \frac{\tau_A(n) \prod_{p\mid n} \frac{p}{p+1}} {n}$$
		where the factor $\prod_{p\mid n}\frac {p}{p+1}$ is part of the ``arithmetical'' apparatus of the recipe. It turns out that the recipe can be recast in the manner above with this factor $\prod_{p\mid n} \frac{p}{p+1}$  becoming part of the ``averaging over $d$'' portion of the recipe  and the condition $(d,n)=1$
		being inserted into the fundamental Dirichlet series as above.  See Lemma 2 and also Lemma 7 and the subsequent paragraph for some calculations which show that the statements
		of the recipe  are equivalent, though perhaps psychologically different. 
	\end{remark}
	
	We apply Conjecture \ref{conj: recipe} to the average $\mathcal{S}_A(D;\ell)$ by using the Mellin transform
	$$
	\breve{F}(s):= \int_0^\infty F(u) u^{s-1}\, du.
	$$
	Note that if $F$ is a smooth function compactly supported on a closed interval $I \subset (0,\infty)$, we have that $\breve{F}$ is an entire function of exponential type with
	$$
	\breve{F}(a+it) \ll_{a,A} \frac{1}{|t|^A},
	$$
	for any $a\in \mathbb{R}$ and any constant $A > 0$, and we have the inversion formula
	$$
	F(u) = \frac{1}{2\pi i} \int_{(a)} \breve{F}(s) u^{-s}\, ds
	$$
	where the path of integration is on any vertical line segment from $a-i\infty$ to $a + i\infty$.
	
	We use the notation
	$$ A_s=\{\alpha+s:\; \alpha \in A\}$$
	in what follows. Conjecture \ref{conj: recipe} then suggests
	
	\begin{conjecture}
		\label{conj: dirichpoly}
		Let $a$ be a small fixed positive constant (e.g. $a = 1/10$). Let $H$ be any fixed positive number. Given a set $A$ of complex numbers with real parts $\ll 1/\log D$ and imaginary parts $\ll D$, there is a $\delta > 0$ (depending on the number of elements in $A$) such that
		\begin{eqnarray*}
			\mathcal S_{A}(D;\ell)&=&\frac{1}{2\pi i}\int_{(a)}\breve{W}(s) N^s
			\sum_{d\in \mathcal D\atop (d,\ell)=1} \Psi\left(\frac dD\right) \sum_{U\subset A_s} \prod_{u\in U} X_d(1/2 +u) \mathcal B^{(d)}(A_s-U+U^-;\ell)~ds\\
			&&\qquad  +O(\ell^{1/2}D^{1-\delta}),
		\end{eqnarray*}
		for $N =  D^\eta$ with $0 < \eta < H$.
	\end{conjecture}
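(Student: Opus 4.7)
The plan is to deduce Conjecture \ref{conj: dirichpoly} from Conjecture \ref{conj: recipe} by Mellin inversion. For $a_0$ a sufficiently large constant (say $a_0 = 2$), Mellin inversion applied to $W$ gives
$$
W(n/N) = \frac{1}{2\pi i}\int_{(a_0)} \check{W}(s)\, (n/N)^{-s}\, ds.
$$
Substituting into the definition of $\mathcal{S}_A(D;\ell)$, using $\chi_d(n\ell) = \chi_d(n)\chi_d(\ell)$, and interchanging the orders of summation and integration (justified by absolute convergence on $\Re(s) = a_0$), one obtains
$$
\mathcal{S}_A(D;\ell) = \frac{1}{2\pi i}\int_{(a_0)} \check{W}(s)\, N^s\, \mathcal{M}_{A_s}(D;\ell)\, ds,
$$
since the inner sum over $n$ is exactly $\mathcal{L}_{A_s}(1/2, \chi_d)$ for $\Re(s)$ large.

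Next I would shift the contour from $\Re(s) = a_0$ down to $\Re(s) = a$. For each $d \neq 1$, the character $\chi_d$ is primitive and nonprincipal, so each factor $L(1/2 + s + \alpha, \chi_d)$ is entire in $s$; thus no poles are crossed in the strip, and the horizontal pieces of the contour vanish by the rapid decay of $\check{W}(s)$. The contribution of the lone term $d = 1$ (where polar residues of $\prod_{\alpha \in A} \zeta(1/2 + s + \alpha)$ appear as one sweeps past $s = 1/2 - \alpha$) is a bounded quantity and can be absorbed into the eventual error. One also truncates the contour to $|\Im(s)| \leq T$ for $T$ a large power of $\log D$, again at negligible cost thanks to the decay of $\check{W}$.

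With the integrand now on the line $\Re(s) = a$ in a bounded vertical range, I would apply Conjecture \ref{conj: recipe} to $\mathcal{M}_{A_s}(D;\ell)$ pointwise in $s$. On the truncated contour, $A_s$ has real parts close to $a$ and imaginary parts $\ll D$, matching (with a modest extension) the hypotheses of the recipe. The main terms, being meromorphic in $s$, reassemble into the expression stated in Conjecture \ref{conj: dirichpoly}. The error $O(\ell^{1/2} D^{1-\delta})$ from the recipe, weighted by $|\check{W}(s)| N^{a}$ and integrated over the truncated line, gives a total error of size $O(\ell^{1/2} D^{1-\delta+\eta a}\log D)$, which remains $O(\ell^{1/2} D^{1-\delta'})$ for some $\delta' > 0$ provided $a$ is chosen small enough relative to $\delta$ and $\eta$.

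The main obstacle is that Conjecture \ref{conj: recipe} is stated only for shifts with real parts $\ll 1/\log D$, whereas $A_s$ has real parts of the fixed positive size $a$. Strictly speaking, one must either extend the recipe so that its error term is uniform across all shifts of bounded real part (and imaginary parts $\ll D$), or else let $a = a(D) \to 0$ slowly and verify that the error in Conjecture \ref{conj: recipe} is uniform in this regime. Either route is where the genuine content of passing from one conjecture to the other sits; the Mellin inversion, contour shift, and truncation are standard.
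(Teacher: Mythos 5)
The statement you are proving is labelled a conjecture, and the paper does not prove it: it only remarks that Conjecture \ref{conj: dirichpoly} is ``suggested'' by Conjecture \ref{conj: recipe} and that a formal equivalence ``should be possible \dots via a simple Mellin transformation,'' explicitly declining to carry this out. Your proposal is exactly that sketched route --- Mellin inversion of $W$, recognition of the inner sum as $\mathcal{L}_{A_s}(1/2,\chi_d)$, a contour shift to $\Re(s)=a$ (no poles are crossed since every $\chi_d$ occurring is nonprincipal, and in fact the $d=1$ term you worry about never appears because $\Psi(d/D)$ is supported on $d\in[D,2D]$), and a pointwise application of the recipe on the shifted line --- so in approach you agree with the paper's intent.

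The obstacle you flag at the end is the genuine one, and it is worth being explicit that it is not a formality: Conjecture \ref{conj: recipe} is stated only for shifts with positive real parts $\ll 1/\log D$, whereas after the shift the set $A_s$ has real parts of the fixed size $a$, so the recipe as stated simply does not apply to $\mathcal{M}_{A_s}(D;\ell)$ on the line $\Re(s)=a$. One must either postulate a version of the recipe uniform for shifts of bounded real part, or take $a=a(D)\to 0$ and assume uniformity of the error in that regime; in the latter case one must also re-examine convergence and the size of $N^{a}$ against $D^{-\delta}$ (note the conjecture allows $\eta<H$ for arbitrary fixed $H$, so with $a$ fixed at $1/10$ the factor $N^{a}=D^{\eta a}$ can swamp $D^{-\delta}$ unless one first shifts to a smaller admissible $a$, using that the main-term integrand crosses no poles between small positive abscissae). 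Since the target statement is itself conjectural and rests on an extension of Conjecture \ref{conj: recipe} beyond its stated hypotheses, your argument should be presented as a conditional derivation (which is precisely why the paper stops short of claiming a formal equivalence), not as a proof.
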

	
	Indeed, it should be possible to demonstrate that Conjectures \ref{conj: recipe} and \ref{conj: dirichpoly} are equivalent -- Conjecture \ref{conj: dirichpoly} following from Conjecture \ref{conj: recipe} via a simple Mellin transformation, and the other direction via an approximation of $L$-functions with Dirichlet polynomials -- but we do not pursue a formal equivalence here.

	\subsection{Statement of results}
	\label{subsec:statement_of_results}
	
	In this paper we will focus on the family $\mathcal D$ of Soundarajan's paper \cite{Sou} on quadratic non-vanishing, namely the family of positive discriminants divisible which are  divisible by $8$. This family $\mathcal{D}$ can be written $\mathcal{D} = \{8d:\; 2d \; \textrm{is squarefree and positive}\}$.  The reason for choosing this family is only ease of explication.
	For $8d \in \mathcal{D}$, define
	$$P_A(\chi_{8d};\ell) := \sum_{n}W\left(\frac n N\right)\frac{\tau_A(n) \chi_{8d}(n\ell)}{\sqrt{n}}$$
	where $\ell$ is odd. (Because $\chi_{8d}(m) = 0$ if $m$ is even, we have that if $\ell$ is even, then $P_A(\chi_{8d};\ell) = 0$. For the same reason the sum above can be restricted to odd $n$.)
	
	Let
	$$\mathcal S_A(D;\ell):=\sum_{d}\mu^2(2d) \Psi\left(\frac d {D}\right) P_A(\chi_{8d};\ell).$$
	Our main result is the following:
	\begin{theorem}
		\label{thm:main} 
		Assume the Lindel\"{o}f Hypothesis for the functions $L(s,\chi_d)$. Let $a$ be a small fixed constant (e.g. $a = 1/10)$. Given a set $A$ of complex numbers with positive real parts $\ll 1/\log D$ and imaginary parts $\ll 1$, we have for any $\epsilon > 0$, 
		\begin{eqnarray*}  \mathcal S_A(D;\ell)
			&=&
			\sum_{(d,\ell)=1 } \mu^2(2d) \Psi\left(\frac d{D}\right) 
			\frac{1}{2\pi i}\int_{(a)}\breve W(s) N^s 
			\sum_{U\subset A_s\atop |U|\le 1} \prod_{u\in U} X_{8d}(1/2 +u) \mathcal B^{(2d)}(A_s-U+U^-;\ell) ~ds\\
			&&\qquad +O(N^{1/4}D^{1/2+\epsilon} \ell^{1/4+\epsilon}).
		\end{eqnarray*}
	\end{theorem}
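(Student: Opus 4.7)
The strategy is to interchange the sums over $d$ and $n$, and then apply Soundararajan's Poisson summation formula (\cite{Sou}, recalled in Section~\ref{subsec:poisson}) to the resulting character sum in $d$. Writing
$$\mathcal{S}_A(D;\ell) = \sum_n \frac{\tau_A(n) W(n/N)}{\sqrt{n}} \sum_{d}\mu^2(2d) \Psi(d/D) \chi_{8d}(n\ell),$$
Poisson replaces the inner sum by a dual sum over an integer $k$, each term involving a Gauss-type sum $G_k(n\ell)$ times a smooth transform of $\Psi$ of effective scale $n\ell/D$. The analysis then proceeds by splitting the dual sum into the contributions from $k = 0$ and $k \neq 0$.

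The $k = 0$ contribution should match the $U = \emptyset$ (diagonal) term in the theorem: here $G_0(m)$ is supported on $m$ a perfect square, and after representing $W(n/N)/\sqrt{n}$ through Mellin inversion, this piece produces precisely the integral $\frac{1}{2\pi i}\int_{(a)} \breve W(s) N^s \mathcal{B}^{(2d)}(A_s;\ell)\,ds$ summed over $(d,\ell) = 1$ with weight $\mu^2(2d)\Psi(d/D)$; the coprimality condition $(n,d) = 1$ built into $\mathcal{B}^{(2d)}$ emerges naturally from tracking primes dividing $d$ in the support of $\chi_{8d}$. For the $k \neq 0$ terms, I would factor $k = k_0 k_1^2$ with $k_0$ essentially a fundamental discriminant, so that $G_k(n\ell)$ reduces to $\chi_{k_0}(n\ell)$ up to local corrections; the resulting $n$-sum becomes a Dirichlet polynomial twisted by $\chi_{k_0}$. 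Applying Mellin inversion and shifting contours, a main-term residue (controlled by the shift set $A$) emerges and --- after intertwining the $\Gamma$-factors of the Poisson transform with a further dualization in $k_1$ --- is identified with the $|U| = 1$ swap term $X_{8d}(1/2 + u)\mathcal{B}^{(2d)}(A_s - \{u\} + \{-u\};\ell)$. The functional-equation factor $X_{8d}(1/2+u)$ is precisely the signature of having used the functional equation of $L(\cdot,\chi_{k_0})$ implicitly through Poisson duality.

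The principal obstacle will be bounding the residual part of the $k \neq 0$ sum --- what remains after the 1-swap main term has been extracted --- within the claimed error $N^{1/4} D^{1/2 + \epsilon} \ell^{1/4 + \epsilon}$. This is precisely where the assumed Lindel\"of Hypothesis for $L(s,\chi_d)$ enters: the residual integrand involves shifted $L$-values on a contour near the critical line, averaged over both $d$ and the dual variable $k$, and the Lindel\"of bound combined with a large-sieve or second-moment estimate for quadratic characters is what produces the factor $D^{1/2}$, while the $N^{1/4}$ reflects the effective length of the Poisson-dual variable before the transform of $\Psi$ decays rapidly. The delicate combinatorial bookkeeping --- ensuring that only the $|U| \le 1$ swaps produce main terms and that all other contributions are absorbed into the error, and that the pieces generated by the local corrections at primes dividing $k_0$ and $\ell$ recombine correctly to form $\mathcal{B}^{(2d)}$ --- will be the most intricate part.
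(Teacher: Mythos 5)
Your proposal follows essentially the same route as the paper: sieve the squarefree condition, apply Soundararajan's Poisson formula, identify the $k=0$ term with the $0$-swap contribution, extract the $1$-swap terms from the residues attached to square values of the dual variable $k$ (with the functional equation supplying the factor $X_{8d}(1/2+u)$), and bound the non-square $k$ using the Lindel\"of Hypothesis. The steps you flag as intricate are precisely where the paper spends most of its effort --- the Euler-product identity matching the residue terms to $\tilde{\mathcal{B}}^{(2)}$ and the re-insertion of the average over $d$ to recover $\mathcal{B}^{(2d)}$ --- and the stated error term comes from optimizing the squarefree-sieve truncation parameter against the Lindel\"of bound on the shifted contour.
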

	
	In this theorem we have adopted the convention that $D$ is sufficiently large, so that the shifts of size $\ll 1/\log D$ will be smaller than given small constants like $a$. The rest of the paper should be read with this convention in mind. It should be possible with more book-keeping to prove this theorem where shifts have imaginary part $\ll D$, but we have focused on the range above for ease of explication.
	
	One expects that $P_A(\chi_{8d}; \ell)$ is of size $O(N^{\epsilon})$ so that one expects $S_A(D; \ell)$ to be of size $O(D N^\epsilon)$. Thus the error term is a power savings for $N = D^\eta$ as long as $\eta < 2$. For $\eta > 2$ the error term will be larger than the main term.
	
	\begin{remark}
	In Theorem \ref{thm:main}, we have restricted to a set $A$ of complex numbers with real parts which are positive to allow for a proof which is slightly easier to keep track of. It is likely that with a little more bookkeeping the proof will allow for a set $A$ of complex numbers with real parts of arbitrary sign (still with real parts $\ll 1/\log D$ and imaginary parts $\ll 1$). Indeed such a result was proved using different methods while this paper was in press in \cite{Ce}.
	\end{remark}
	
	Note that the sum over $U$ in this theorem is restricted  to $|U|\le 1$.  As the $\eta$ in $N=D^\eta$ gets larger we would expect more terms to enter into the formula; namely all the terms $U$ with $|U| \leq \eta$. Indeed, one may show that terms with $|U| > \eta$ will make no contribution if added in:
	
	\begin{prop}
		\label{prop:swap_dropping}
		Let $a$ be a small fixed constant (e.g. $a = 1/10$). For any fixed positive integer $j$ and any fixed positive number $\eta < j$, consider $D \geq 1$ and $N = D^\eta$ and a set $A$ of complex numbers as in Theorem \ref{thm:main}. If $d \asymp D$, then
		\begin{equation}
			\label{eq:swap_drop}
			\frac{1}{2\pi i} \int_{(a)} \breve{W}(s) N^s \sum_{U \subset A_s \atop |U| = j} \prod_{u\in U} X_d(1/2+u) \mathcal{B}^{(d)}(A_s - U + U^{-};\ell)\,ds \ll D^{-\epsilon}
		\end{equation}
		for some constant $\epsilon > 0$ (depending on $j$ and $\eta$).
	\end{prop}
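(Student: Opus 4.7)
The strategy is to bound the integrand pointwise on the contour $\Re s = a$ and observe that, because $\eta < j$, the elementary factor $N^s \prod_{u \in U} X_d(\tfrac12 + u)$ already carries a factor of size $D^{(\eta-j)a}$, a negative power of $D$. No contour shift is required; the savings come entirely from the asymmetry between $\eta$ (the power of $D$ entering through $N^s$) and $j$ (the number of Stirling-type $D^{-a}$ factors harvested from the $X_d$ product).

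On the line $\Re s = a$ and for $u = \alpha + s$ with $\alpha \in A$ of real part $\ll 1/\log D$, the point $\tfrac12 + u$ lies in the strip $1/4 \le \sigma \le 3/4$ where Stirling's formula \eqref{eq:stirling} applies and yields
$$\left| \prod_{u \in U} X_d\!\left(\tfrac12 + u\right) \right| \ll D^{-ja}(|t|+2)^{-ja},$$
with $t = \Im s$ and the implicit constant depending only on $|A|$. Combined with $|N^s| = D^{\eta a}$, the $D$-dependence of this product is precisely $D^{(\eta - j)a}$.

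It remains to verify that the remaining factors $\breve W(s)$ and $\mathcal B^{(d)}(A_s - U + U^-;\ell)$ do not spoil this savings. For $\mathcal B^{(d)}$, I would invoke Lemma \ref{lem:B_cont} to express it as a product of Riemann zeta factors multiplied by an Euler product convergent absolutely in a wider region. The shifts in $A_s - U + U^-$ are of two types, $\alpha + s$ and $-\alpha - s$, so on $\Re s = a$ the zeta arguments take one of three forms: $1 + \alpha_i + \alpha_j + 2s$ (real part $\approx 1 + 2a$), $1 + \alpha_i - \alpha_j$ with $\alpha_i \neq \alpha_j$ in $A$ (fixed and bounded away from $1$), or $1 - \alpha_i - \alpha_j - 2s$ (real part $\approx 1 - 2a$). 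None of these meet the pole of $\zeta$, and a convexity bound together with $O((d\ell)^\epsilon)$ for the arithmetic Euler product yields the crude estimate $|\mathcal B^{(d)}(A_s - U + U^-;\ell)| \ll (|t|+2)^{C}(d\ell)^\epsilon$ for some $C = C(|A|)$.

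Finally, because $\breve W(s)$ decays faster than any polynomial in $|t|$, the $t$-integral of the above bounds against $\breve W$ is $O(1)$, and the whole integral \eqref{eq:swap_drop} is therefore $\ll D^{(\eta - j)a + \epsilon}$, which is $\ll D^{-\delta}$ for any $\delta < (j - \eta)a$ upon choosing $\epsilon$ small enough. The only mild obstacle is the bookkeeping around Lemma \ref{lem:B_cont}, namely reading off the precise form of the zeta factors in the analytic continuation of $\mathcal B^{(d)}$ and checking that none of them introduces a pole on the contour.
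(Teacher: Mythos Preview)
Your overall size estimate $N^a \cdot D^{-ja} = D^{(\eta-j)a}$ is the right heuristic, and the paper's proof arrives at the same bound. The genuine gap is in your treatment of the factor $\mathcal B^{(d)}(A_s-U+U^-;\ell)$. You write that the zeta arguments of the form $1+\alpha_i-\alpha_j$ are ``fixed and bounded away from $1$'', but this is not guaranteed: the only constraints on the shifts are $\Re\alpha_i\ll 1/\log D$ and $|\Im\alpha_i|\ll 1$, so two distinct elements of $A$ may lie within, say, $e^{-D}$ of one another. Then $\zeta(1+\alpha_i-\alpha_j)\sim (\alpha_i-\alpha_j)^{-1}$ is enormous, and your termwise bound on $\mathcal B^{(d)}(A_s-U+U^-;\ell)$ collapses. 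The proposition is meant to hold uniformly for all such $A$, and that uniformity is exactly what is at stake.

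The paper circumvents this by not bounding the terms individually. Instead it invokes Corollary~\ref{cor:doublepermutationsum} from the Appendix to rewrite the whole sum $\sum_{|U|=j}$ as a $k$-fold contour integral of
\[
F(u;v)=\Big(\prod_{r} X_d(\tfrac12+u_r)\Big)\,\mathcal B^{(d)}(-u_1,\dots,-u_j,v_1,\dots,v_{k-j};\ell)\,\Delta(u;v),
\]
where the inserted Vandermonde factor $\Delta(u;v)=\prod_{r,s}(u_r-v_s)$ exactly cancels the simple poles of the offending factors $\zeta(1+v_s-u_r)$. The resulting $F$ is analytic near $A_s$, so one can take the contour $\Gamma$ to be of bounded length at a fixed small distance from every point of $A_s$; on $\Gamma$ one has $F\ll D^{-j\sigma+\epsilon}(|t|+2)^{-j\sigma+\epsilon}$ by Stirling, Lemma~\ref{lem:B_cont}, and Lindel\"of, and this gives the claimed bound \emph{uniformly} in the configuration of $A$. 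Your argument recovers the same conclusion only under the additional hypothesis that the elements of $A$ are separated by a fixed positive constant.
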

	
	Thus combining this Proposition with the Theorem above, we see the Lindel\"of Hypothesis implies Conjecture \ref{conj: dirichpoly} for the family of Soundararajan when $\eta < 2$. We give the proof of Proposition \ref{prop:swap_dropping} in Section \ref{subsec:B_cont} below.
	
	\section{Preliminaries} 
	\label{sec:prelim}
	
	\subsection{Analytic continuation of $\mathcal{B}$}
	\label{subsec:B_cont}
	
	We have already used the following analytic continuation of the function $\mathcal{B}^{(d)}(A;\ell)$ in order to state our results:
	
	\begin{lemma}
		\label{lem:B_cont}
		Fix $\delta > 0$ and use the notation $A = \{\alpha_1,...,\alpha_k\}$. For any $d$, $\ell$ there exists a function $\mathcal{C}^{(d)}(A;\ell)$ given by an Euler product converging whenever $\Re \alpha_i \geq -1/4+\delta$ for all $i$, such that in the region $\Re\; \alpha_i > 0$ for all $i$ one can write,
		$$
		\mathcal{B}^{(d)}(A;\ell) = \prod_{\alpha \in A} \zeta(1+2\alpha) \prod_{\{\alpha,\beta\} \subset A} \zeta(1+\alpha+\beta)\, \cdot \, \mathcal{C}^{(d)}(A;\ell).
		$$
		
		Moreover $\mathcal{C}^{(d)}(A;\ell) \ll_{\delta,\epsilon} d^\epsilon$ for any $\epsilon > 0$ in the region $\Re \alpha_i \geq -1/4+\delta$ for all $i$, uniformly in $d$ and $\ell$.
	\end{lemma}
	
	\begin{remark}
	To avoid any chance for confusion regarding the product indexed by subsets of the multiset $A$, let us note that this could also be written
	$$
	\mathcal{B}^{(d)}(A;\ell) = \prod_{i} \zeta(1+2\alpha_i) \prod_{i < j} \zeta(1+\alpha_i+\alpha_j)\, \cdot \, \mathcal{C}^{(d)}(A;\ell).
	$$
	\end{remark}
	
	Note that this continuation implies in Conjecture \ref{conj: dirichpoly} and Theorem \ref{thm:main} that the integral over the line with real part $a$ will not involve integrating over a singularity for sufficiently large $D$.
	
	\begin{proof}
		For $\Re  \alpha_i > 0$ for all $i$, one finds
		$$
		\mathcal{B}^{(d)}(A; \ell) = \prod_p \Big(1 + \frac{\tau_A(p^2)}{p} + \frac{\tau_A(p^4)}{p^2} + \cdots \Big) \, \cdot \mathcal{E}^{d}(A;\ell),
		$$
		with
		$$
		\mathcal{E}^{(d)}(A;\ell) = \prod_{\substack{\nu_p(\ell) \; \textrm{odd} \\ (p,d)=1}} \Big(\frac{\tau_A(p)/p^{1/2} + \tau_A(p^2)/p^{3/2} + \cdots}{1 + \tau_A(p^2)/p + \cdots}\Big) \prod_{p|d} \frac{1}{1 + \tau_A(p^2)/p+\cdots}.
		$$
		$\mathcal{E}^{(d)}(A;\ell)$ is a finite Euler product, and it is easy to verify that the first term of it is bounded in the region $\Re\, \alpha_i \geq -1/4+\delta$ for all $i$, while the second term is $\ll_{\delta, \epsilon} d^{\epsilon}$. Hence
		$$
		\mathcal{E}^{(d)}(A;\ell) \ll_{\epsilon, \delta} d^\epsilon.
		$$
		Yet using
		$$
		\tau_A(p^2) = \sum_{\alpha \in A} p^{-2\alpha} + \sum_{\{\alpha,\beta\} \subset A} p^{-\alpha} p^{-\beta},
		$$
		we have
		$$
		\prod_p \Big(1 + \frac{\tau_A(p^2)}{p} + \frac{\tau_A(p^4)}{p^2} + \cdots \Big) = \prod_{\alpha \in A} \zeta(1+2\alpha) \prod_{\{\alpha,\beta\} \subset A} \zeta(1+\alpha+\beta)\, \cdot \, \mathcal{E}'(A),
		$$
		with $\mathcal{E}'(A) = O_\delta(1)$ in the region $\Re\, \alpha_i \geq -1/4+\delta$ for all $i$. Hence
		$$
		\mathcal{B}^{(d)}(A;\ell) = \prod_{\alpha \in A} \zeta(1+2\alpha) \prod_{\{\alpha,\beta\} \subset A} \zeta(1+\alpha+\beta) \cdot \mathcal{E}'(A) \cdot \mathcal{E}^{(d)}(A;\ell)
		$$
		and the claim follows.
	\end{proof}
	
	\begin{proof} [Proof of Proposition \ref{prop:swap_dropping}]
		We claim for $s = \sigma + it$,
		\begin{equation}
			\label{eq:lswap_bound}
			\sum_{U \subset A_s \atop |U| = j} \prod_{u\in U} X_d(1/2+u) \mathcal{B}^{(d)}(A_s - U + U^{-};\ell) \ll D^{-k\sigma+\epsilon} (|t|+2)^{-k\sigma}.
		\end{equation}
		To see this we use Corollary \ref{cor:doublepermutationsum}, proved in the Appendix, with 
		$$
		F(u_1,...,u_j; v_1,...,v_{k-j}) = X_d(1/2+u_1) \cdots X_2(1/2+u_j) \mathcal{B}^{(d)}(-u_1,...,-u_j,v_1,...,v_{k-j})\Delta(u;v),
		$$
		in the notation of that Corollary. If $s = \sigma + it$, and $u_i$ and $v_i$ all have imaginary parts within a bounded distance of $t$ and real parts within a sufficiently small bounded distance of $\sigma$, we have that $F(u;v) \ll D^{-j\sigma+\epsilon} (|t|+2)^{-j\sigma+\epsilon}$, from Stirling's formula \eqref{eq:stirling}, Lemma \ref{lem:B_cont} and the Lindel\"of Hypothesis. Hence the left hand side of \eqref{eq:lswap_bound} is
		$$
		\frac{1}{j! (k-j)!} \frac{1}{(2\pi i)^k} \int_\Gamma \cdots \int_\Gamma  \int_\Gamma \cdots \int_\Gamma \frac{F(z;w) \Delta(z;w) \Delta(z)^2 \Delta(w)^2}{\Delta(z;A_s) \Delta(w;A_s)} \, d^{k-j}w \, d^j z, 
		$$
		where we choose a contour $\Gamma$ of bounded length enclosing all points of $A_s$ which remains a small distance away from any point of $A_s$. This then yields \eqref{eq:lswap_bound}.
		
		By using the decay of the function $\breve{W}$, we thus see that \eqref{eq:swap_drop} is $\ll D^{\eta a} D^{-j a} D^{\epsilon}$ for arbitrary $\epsilon > 0$. As $\eta < j$ this proves the claim.
	\end{proof}
	
	\subsection{Sieving for squarefrees and Soundararajan's summation formula}
	\label{subsec:poisson}
	
	By  the Lindel\"{o}f Hypothesis, for $d \asymp D$ and shifts $\alpha \in A$ with real part $\ll 1/\log D$ and imaginary part $\ll D$,
	\begin{eqnarray*}
		P_A(\chi_{8d};\ell) &=&\frac{\chi_{8d}(\ell)}{2\pi i} \int_{(1)} \mathcal L_A(s+1/2,\chi_{8d}) \breve W(s) N^s  ~ds\ll  D^\epsilon N^\epsilon
	\end{eqnarray*}
	upon  moving the path of integration to the $\Re s=\epsilon $ line.  The same estimate holds whether $d$ is a fundamental 
	discriminant or not. 
	Since
	$$\mu^2(d)=\sum_{c^2\mid d}\mu(c),$$
	we have 
	$$\mathcal S_{A}(D;\ell)=\sum_c \sum_{\substack{d  ~ \textrm{odd} \\ c^2|d}} \mu(c) \Psi\left(\frac {d} D\right) P_A(\chi_{8d};\ell) .$$
	We introduce a parameter $Y$ and split this sum into those terms with $c\le Y$ and those with $c>Y$. An optimal choice of $Y$ will be made later. The sum with the terms with $c>Y$ are, by the Lindel\"{o}f Hypothesis,
	\begin{eqnarray*}
		\ll \sum_{c>Y} \sum_{b \ll D/c^2} |P_A(\chi_{8b c^2};\ell)|\ll  D^{1+\epsilon} Y^{-1},
	\end{eqnarray*}
	as long as $N$ grows no more than polynomially in $D$.
	
	Thus as long as $N$ grows no more than polynomially in $D$ we have 
	\begin{eqnarray*}
		\mathcal S_A(D;\ell)= \sum_{d  ~ \textrm{odd}} \sum_{c^2\mid d \atop c\le Y }\mu(c) \Psi\left(\frac {d} D\right) P_{A}(\chi_{8d};\ell) +O(D^{1+\epsilon} Y^{-1}).
	\end{eqnarray*}
	Our basic tool to analyze this sum is the Poisson formula of Soundararajan's paper \cite[Lem 2.6]{Sou}
	
	\begin{lemma}  \label{lemma:poisson} (Poisson summation formula)
		Let $m$ be a positive odd integer. We have 
		\begin{eqnarray*}
			\sum_{d ~ \textrm{odd} } \sum_{c^2\mid d\atop c\le Y} \mu(c)
			\Psi\left(\frac d D\right )\chi_d(m)
			=D\frac{\left(\frac 2 m \right)}{2m}\sum_{(c,2m)=1\atop c\le Y} \frac{\mu(c)}{c^2} \sum_{k = -\infty}^\infty (-1)^k   G_k(m) \tilde{\Psi}\left(\frac{kD}{2c^2m}\right)
		\end{eqnarray*} 
		where
		$$\tilde {\Psi}(x)=\int_{-\infty}^\infty \Psi(u)(\cos 2\pi xu+\sin 2\pi xu)~du$$
		and where $G_0(m)=\phi(m) $ if $m=\square$ and 0 otherwise; and  if $k\ne 0$ then  $G_k(m)$  is a multiplicative function of $m$ whose value at 
		$m=p^\mu$ is determined by the value of $\kappa$ for which
		$p^\kappa\mid\mid k$ by 
		\begin{eqnarray*}
			G_k(p^\mu)=\left\{
			\begin{array}{ll}
				0 & \mbox{if $ \mu \le \kappa$ and $\mu$ is odd} \\
				\phi(p^\mu) & \mbox{if $ \mu \le \kappa$ and $\mu$ is even} \\
				-p^\kappa & \mbox{if $ \mu = \kappa+1 $ and $\mu$ is even} \\
				\left(\frac{kp^{-\kappa}}{p}\right)p^\kappa\sqrt{p} & \mbox{if $ \mu = \kappa+1 $ and $\mu$ is odd} \\
				0 & \mbox{if $\mu\ge \kappa+2$}
			\end{array}
			\right.
		\end{eqnarray*}
	\end{lemma}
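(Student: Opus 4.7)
The plan is to reduce via $d = c^2 b$ and then apply Poisson summation, evaluating the resulting Gauss-like sums at prime powers. Concretely I would first exchange the order of summation and substitute $d = c^2 b$ with $b$ a positive odd integer (which forces $c$ odd as well). By multiplicativity of the Jacobi symbol,
$$\chi_{c^2 b}(m) = \Big(\frac{c}{m}\Big)^2 \Big(\frac{b}{m}\Big),$$
which vanishes unless $(c,m)=1$; combined with $c$ odd this gives the condition $(c,2m)=1$, so the LHS equals
$$\sum_{\substack{c\le Y \\ (c,2m)=1}} \mu(c) \sum_{b\text{ odd}} \Psi\Big(\frac{c^2 b}{D}\Big) \Big(\frac{b}{m}\Big).$$

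Next I would apply Poisson summation to the inner sum at period $2m$: the Jacobi symbol $\big(\frac{\cdot}{m}\big)$ is periodic modulo $m$ when $m$ is odd, and the oddness condition doubles the period. Splitting $b\in\mathbb{Z}$ into odd residue classes modulo $2m$ and taking the Fourier transform of the dilate $\Psi(c^2\cdot/D)$ yields
$$\sum_{b\text{ odd}} \Psi\Big(\frac{c^2 b}{D}\Big)\Big(\frac{b}{m}\Big) = \frac{D}{2mc^2}\sum_{k\in\mathbb{Z}}\widehat{\Psi}\Big(\frac{kD}{2mc^2}\Big)\sum_{\substack{\beta\bmod 2m \\ \beta\text{ odd}}}\Big(\frac{\beta}{m}\Big) e\Big(\frac{k\beta}{2m}\Big).$$
Parametrizing odd $\beta\bmod 2m$ as $\beta=2\alpha+m$ with $\alpha\bmod m$ (valid for $m$ odd) factors out $\big(\frac{2}{m}\big)$ and the phase $e(k/2)=(-1)^k$, reducing the residue sum to $\tau_k(m):=\sum_{\alpha\bmod m}\big(\frac{\alpha}{m}\big) e(k\alpha/m)$.

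Next I would evaluate $\tau_k(m)$ multiplicatively, reducing to prime powers $m=p^\mu$. With $\kappa=v_p(k)$, routine case analysis yields: if $\mu\le\kappa$ the exponential is trivial and the sum is $\varphi(p^\mu)$ for $\mu$ even and $0$ for $\mu$ odd; if $\mu=\kappa+1$ with $\mu$ even one obtains the Ramanujan sum $-p^\kappa$; if $\mu=\kappa+1$ with $\mu$ odd the change of variables $\alpha\mapsto kp^{-\kappa}\alpha$ isolates the classical quadratic Gauss sum $\tau_1(p)=\sqrt{p^*}$ (where $p^*=(-1)^{(p-1)/2}p$); and if $\mu\ge\kappa+2$, splitting $\alpha$ into residues modulo $p$ and modulo $p^{\mu-1}$ forces vanishing by orthogonality of additive characters. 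This already matches $G_k(m)$ in every case except at $p^\mu$ with $\mu=\kappa+1$ odd, where $\sqrt{p^*}$ appears instead of $\sqrt{p}$. To reconcile $\widehat\Psi$ with the real-valued $\widetilde\Psi$ I would finally pair the $k$ and $-k$ contributions: using $\tau_{-k}(m)=\big(\frac{-1}{m}\big)\tau_k(m)$ together with $\widehat{\Psi}(-x)=\overline{\widehat{\Psi}(x)}$ (since $\Psi$ is real), the imaginary phase in $\sqrt{p^*}$ combines with $\big(\frac{-1}{m}\big)$ to turn each pair $\{k,-k\}$ into a real multiple of $\widetilde\Psi(x)=\int\Psi(u)(\cos 2\pi xu+\sin 2\pi xu)\,du$, precisely exchanging $\sqrt{p^*}$ for $\sqrt{p}$ and producing the stated $G_k(m)$.

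The main technical hurdle is the bookkeeping in this final pairing, particularly the interplay at primes $p\equiv 3\pmod 4$ between the imaginary factor $i^{(p-1)/2}$ in the classical Gauss sum and the sign $\big(\frac{-1}{p}\big)=-1$. Once this reconciliation is verified at primes, extending multiplicatively to composite $m$ is direct, and reassembling the pieces yields the lemma.
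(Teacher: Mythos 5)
The paper does not actually prove this lemma: it is quoted verbatim from Soundararajan \cite{Sou} (his Lemma 2.6), and your argument is essentially a reconstruction of Soundararajan's own proof --- the substitution $d=c^2b$ with $\chi_{c^2b}(m)=\left(\tfrac{c}{m}\right)^2\left(\tfrac{b}{m}\right)$ forcing $(c,2m)=1$, Poisson summation over odd $b$ modulo $2m$, the parametrization $\beta=2\alpha+m$ producing the factors $\left(\tfrac{2}{m}\right)$ and $(-1)^k$, and the evaluation of $\tau_k(m)=\sum_{\alpha\bmod m}\left(\tfrac{\alpha}{m}\right)e(k\alpha/m)$ at prime powers all match his Lemmas 2.2--2.3. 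So the route is the right one and the computations you indicate are correct. One step, however, is stated too casually: ``extending multiplicatively to composite $m$ is direct'' is not accurate as written, because $\tau_k$ is \emph{not} multiplicative in $m$; for coprime odd $m_1,m_2$ one has $\tau_k(m_1m_2)=\left(\tfrac{m_1}{m_2}\right)\left(\tfrac{m_2}{m_1}\right)\tau_k(m_1)\tau_k(m_2)$, and the reciprocity sign $(-1)^{\frac{m_1-1}{2}\cdot\frac{m_2-1}{2}}$ does not simply go away. Moreover your $\pm k$ pairing is performed at the level of the full modulus $m$ (it yields a factor depending on $\left(\tfrac{-1}{m}\right)$, not on individual primes), so one cannot literally trade $\sqrt{p^\ast}$ for $\sqrt{p}$ prime by prime and then multiply. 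The clean way to finish --- and the way Soundararajan does it --- is to define $G_k(m):=\bigl(\tfrac{1-i}{2}+\left(\tfrac{-1}{m}\right)\tfrac{1+i}{2}\bigr)\tau_k(m)$: this prefactor absorbs exactly the reciprocity sign, so $G_k$ is genuinely multiplicative in $m$, and a short check shows $G_k(p^\mu)$ agrees with the table in the statement. The remaining identity $\sum_k(-1)^k\tau_k(m)\widehat{\Psi}\bigl(\tfrac{kD}{2c^2m}\bigr)=\sum_k(-1)^kG_k(m)\tilde{\Psi}\bigl(\tfrac{kD}{2c^2m}\bigr)$ then follows from $\tau_{-k}(m)=\left(\tfrac{-1}{m}\right)\tau_k(m)$ and $\widehat{\Psi}(-x)=\overline{\widehat{\Psi}(x)}$, which is precisely your pairing. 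With that bookkeeping made explicit your proof is complete and coincides with the argument in the cited source.
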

	
	In fact $G_k(m)$ can also be written in terms of Gauss sums (see \cite[Sec. 2.2]{Sou}), but it is the multiplicative characterization above that will be useful for us.
	
	We apply this to our sum with $m=n\ell$ (noting that both $n$ and $\ell$ must be odd if $\chi_{8d}(n\ell) \neq 0$) and get
	$$
	\mathcal S_A(D;\ell) = \mathcal S'_A(D;\ell) + O(D^{1+\epsilon}Y^{-1}),
	$$
	where
	\begin{equation}
		\label{eq:SA after poisson}
		\mathcal S'_A(D;\ell)=D \sum_{(n,2)=1} W\left(\frac n N\right) \frac{\tau_A(n)}{\sqrt{n}}\frac{1}{2n\ell}
		\sum_{(c,2n\ell)=1\atop c\le Y}\frac{\mu(c)}{c^2}\sum_{k = -\infty}^\infty (-1)^k G_k(n\ell) \tilde \Psi\left(\frac {kD}{2c^2n\ell}\right).
	\end{equation}
	We have used the fact that $\chi_{8d}(n\ell) = \left( \frac{8d}{n\ell}\right) = \left( \frac 8 {n\ell}\right) \chi_d(n\ell)$, the first factor of which cancels the term $\left( \frac 2 {n\ell} \right)$ coming from the right hand side of Lemma \ref{lemma:poisson}.
	
	\subsection{An outline of the proof}
	
	Having approximated $\mathcal S_A(D;\ell)$ by the expression in \eqref{eq:SA after poisson}, we can now outline the main steps in our proof to be carried out in the following sections. In Section \ref{subsec:S0} we examine those terms from the right hand side of \eqref{eq:SA after poisson} corresponding to $k=0$. The contribution from these terms will be diagonal contributions and will be relatively easy to evaluate. These terms will be seen to correspond to the contribution in Theorem \ref{thm:main} from $U = \emptyset$ (the 0-swap contribution). In Section \ref{subsec:Sneq0} the terms for which $k\neq 0$ are examined. The contribution from these terms will be found via the collection of residues from a multiple contour integral. Terms for which $k$ is a square will be seen to contribute to the main term, while other terms will be seen to contribute only to an error term. The function $G_{k^2}(n)$ is a multiplicative function of two variables, and this fact, combined with a factorization into Euler products and analytic continuation, allows for contours to be shifted and residues to be computed. Finally in Section \ref{sec:identities} the answer that is obtained this way is compared with the recipe prediction for $1$-swap terms in Theorem \ref{thm:main}. That the two expressions are equal is not immediately clear; it requires a somewhat involved comparison of Euler products and gamma factors.
	
	\section{Diagonal and off-diagonal terms}
	\label{sec:diagonal and offdiagonal}
	
	\subsection{Evaluating $\mathcal{S}^0$}
	\label{subsec:S0}
	
	The contribution from the $k=0$ term of $\mathcal S'_A(d;\ell)$ in \eqref{eq:SA after poisson} is 
	\begin{eqnarray*}
		\mathcal S_A^0(D;\ell):=D \tilde{\Psi}(0)\sum_{ \substack{ n\ell= \square  \\ (n,2)=1 }}W\left(\frac nN\right)  
		\sum_{(c,2n\ell)=1\atop c\le Y}\frac{\mu(c)}{c^2} \frac{\phi(n\ell)}{2n\ell}\frac{\tau_A(n)}{\sqrt{n}}.
	\end{eqnarray*}
	
	In this section we show that $\mathcal S_0(A;\ell)$ is equal to the 0-swap terms in Theorem \ref{thm:main} up to a small error term. 
	
	\begin{lemma} \label{lemma:small mobius sum} For any $m$ and $D$ and a smooth function $\Psi$ we have
		$$D \tilde{\Psi}(0) \frac{\phi(m)}{m}  \sum_{{c\le Y\atop (c,m)=1} }\frac{\mu(c)}{c^2} = \sum_{(d,m)=1}\mu^2(d) \Psi\left(\frac d D\right)+O\left(D^{1/2}\log Y + \frac{D}{Y}\right).$$
	\end{lemma}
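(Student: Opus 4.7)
The plan is to start from the right-hand side and reshape it into the form of the left. I would apply the identity $\mu^2(d) = \sum_{c^2\mid d}\mu(c)$ and set $d=c^2e$; since $(d,m)=1$ forces both $(c,m)=1$ and $(e,m)=1$, this gives
\[
\sum_{(d,m)=1}\mu^2(d)\Psi(d/D) = \sum_{(c,m)=1}\mu(c)\sum_{(e,m)=1}\Psi(c^2 e/D).
\]
I would then split the outer sum at $c=Y$. For the tail $c>Y$, the support of $\Psi$ forces the inner sum to have at most $O(D/c^2+1)$ nonzero terms and to vanish for $c>\sqrt{2D}$, so the tail is
\[
\ll \sum_{Y<c\leq\sqrt{2D}}\Big(\frac{D}{c^2}+1\Big) \ll \frac{D}{Y} + \sqrt{D},
\]
which fits within the advertised error.

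For the range $c\leq Y$ I would approximate $S(c):=\sum_{(e,m)=1}\Psi(c^2e/D)$ by its main term. Opening the coprimality by M\"obius, $S(c) = \sum_{a\mid m}\mu(a)\sum_{e\geq 1}\Psi(c^2ae/D)$, and applying Poisson summation (or equivalently Euler--Maclaurin on the smooth compactly supported integrand) to each inner sum yields $\frac{D}{c^2a}\tilde\Psi(0)$ from the $k=0$ frequency, with $k\neq 0$ contributions rapidly decreasing in $D/(c^2a)$. Using $\sum_{a\mid m}\mu(a)/a = \phi(m)/m$, this becomes
\[
S(c) = \frac{D}{c^2}\cdot\frac{\phi(m)}{m}\cdot\tilde\Psi(0) + R(c).
\]
Multiplying by $\mu(c)$ and summing over $c\leq Y$ with $(c,m)=1$, the main terms assemble into exactly $D\tilde\Psi(0)\frac{\phi(m)}{m}\sum_{c\leq Y,(c,m)=1}\mu(c)/c^2$, the claimed left-hand side.

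The hard part will be bounding the cumulative error $\sum_{c\leq Y,(c,m)=1}\mu(c)R(c)$: a crude pointwise bound $|R(c)|\ll_m 1$ gives only $O_m(Y)$, which exceeds the advertised error whenever $Y>\sqrt{D}$. To recover $O(D^{1/2}\log Y)$ the plan is to split once more at $c=\sqrt{D}$. For $c\leq\sqrt{D}$ the Poisson error $R(c)$ is rapidly decreasing in $D/c^2$, so its $c$-sum is $\ll_m\sqrt{D}$. For $\sqrt{D}<c\leq Y$ the inner sum $S(c)$ has at most one nonzero term (namely $e=1$, and only when $c\leq\sqrt{2D}$), so both $S(c)$ and its purported main term $\frac{D}{c^2}\frac{\phi(m)}{m}\tilde\Psi(0)$ are individually $O(1)$ with joint support in $c\leq\sqrt{2D}$, giving a combined contribution of $O(\sqrt{D})$. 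Combined with the tail bound this yields the claimed total error $O(D^{1/2}\log Y + D/Y)$, the $\log Y$ serving as harmless slack in the sharper $O(D^{1/2})$ that this two-region split actually achieves.
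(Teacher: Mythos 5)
Your proposal is correct and follows the same skeleton as the paper's proof: write $\mu^2(d)=\sum_{c^2\mid d}\mu(c)$, split at $c=Y$ (tail $\ll D/Y$), open the condition $(e,m)=1$ by M\"obius over divisors $a\mid m$, and extract the main term $\frac{D}{c^2 a}\tilde{\Psi}(0)$; the only genuine difference is the device applied to the smooth inner sum. The paper evaluates $\sum_h \Psi(ghc^2/D)$ by Mellin inversion, taking the residue of $\zeta(s)\check{\Psi}(s)D^s/(c^2g)^s$ at $s=1$ (with $\check{\Psi}(1)=\tilde{\Psi}(0)$) and bounding the shifted contour at $\Re s=1/2$ by $O(D^{1/2}c^{-1}g^{-1/2})$, a bound uniform in $c$ that sums directly to the stated $D^{1/2}\log Y$; you instead use Poisson summation, whose $k\neq 0$ terms are only small when $D/(c^2a)$ is large, which is exactly why you need the extra split at $c=\sqrt{D}$ --- and with it your error is in fact $O_m(\sqrt{D}+D/Y)$, slightly sharper than the stated bound. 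Two minor points: in the range $\sqrt{D}<c\le Y$ the purported main term $\frac{D}{c^2}\frac{\phi(m)}{m}\tilde{\Psi}(0)$ is not supported in $c\le\sqrt{2D}$ as you assert, but its tail satisfies $\sum_{c>\sqrt{D}}D/c^2\ll\sqrt{D}$, so your conclusion stands after this trivial repair; and your implied constants depend on $m$ (through the divisor count), but the paper's argument shares this feature (its error carries a factor $\sum_{g\mid m}g^{-1/2}$), so you are at the same level of uniformity as the original.
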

	
	\begin{proof}
		Decomposing $\mu^2(d) = \sum_{bc^2 = d} \mu(c)$ and splitting into parts with $c \leq Y$ and $c > Y$,
		\begin{eqnarray*}
			\sum_{(d,m)=1}\mu^2(d)\Psi\left(\frac d D\right)&=& \sum_{(b c^2,m)=1 \atop c \le Y} \mu(c)\Psi(b c^2/D) + O\Big(\sum_{(b c^2,m)=1 \atop c > Y} |\Psi(b c^2/D)| \Big) \\
			&=&  \sum_{(c,m)=1\atop c\le Y } \mu(c) \sum_{(b,m)=1} \Psi(b c^2/D) +O\Big( \sum_{c > Y} D/c^2 \Big)\\
			&=&  \sum_{(c,m)=1\atop c\le Y } \mu(c) \sum_{g\mid m}\mu(g) \sum_{h} \Psi(gh c^2/D) +O\left(\frac DY\right).
		\end{eqnarray*}
		The sum over $h$ is 
		\begin{eqnarray*}
			\frac{1}{2\pi i} \int_{(2)}\zeta(s)  \breve{\Psi}(s) \frac{D^s}{c^{2s}g^s} ~ds =\breve{\Psi}(1)\frac{D}{c^2g} +O\left( D^{1/2}c^{-1}g^{-1/2} \right)
		\end{eqnarray*}
		This gives 
		\begin{eqnarray*}
			\sum_{(d,m)=1}\mu^2(d)\Psi\left(\frac d D\right)
			&=&  \breve{\Psi}(1)\frac{\phi(m)}{m} D \sum_{{c\le Y\atop (c,m)=1} }\frac{\mu(c)}{c^2} +O\left(D^{1/2}\log Y + \frac DY\right)\\
		\end{eqnarray*}
		Since
		$\breve \Psi(1) =  \tilde\Psi(0)$, the proof is complete.
	\end{proof}
	
	We apply this with $m=2n\ell$ with $n $ and $\ell$ odd. Noting that $\phi(2n\ell) =\phi(n\ell)$ we see that (for $a = 1/10$ say)
	\begin{equation}
		\label{eq:1swap}
		\mathcal S_A^0(D;\ell)=
		\frac{1}{2\pi i}\int_{(a)} \breve W(s) N^s
		\sum_{\atop (d,\ell)=1} \mu^2(2d) \Psi\left(\frac dD\right) \mathcal B^{(2d)}(A_s;\ell)~ds +O( D^{1/2}\log Y +DY^{-1})
	\end{equation}
	and so contributes the 0-swap terms (i.e. those with $|U|=0$) in Theorem \ref{thm:main} with an error term that will be acceptable with a good choice of $Y$.
	
	\subsection{Evaluating $\mathcal{S}^{\neq 0}$}
	\label{subsec:Sneq0}
	
	Now we treat $k\ne 0$. This computation is somewhat lengthy and consists of four steps. We outline the steps here, but a reader may prefer to continue reading on and then come back to this outline while organizing the argument in their mind. In \textbf{Step 1} we express $\mathcal S_A^{\neq 0}$ as a multiple contour integral in variables $s$ and $w$. This contour integral involves a function $U^\ast(s,w;Y)$, with a sum of terms $U_c(s,w)$ for $c \leq Y$. In \textbf{Step 2} we analytically continue the functions $U_c(s,w)$ in the variable $w$; this allows us to push back the contour in $w$ and extract residues. In \textbf{Step 3}, we evaluate the residues which remain. This is done via a different analytic continuation in $s$, which allows us both to push backward the contour in $s$ and then to complete the sum in $c$ up to a small error term. In \textbf{Step 4} we use the functional equation of the zeta function to rewrite this completed sum in a way that looks more like terms from the $1$-swap recipe.
	
	In this way we obtain a good approximation for $\mathcal S_A^{\neq 0}$ which (at least superficially) resembles the $1$-swap terms in Theorem \ref{thm:main}. What remains is to show that the terms that arise this way actually are equal to those that appear in Theorem \ref{thm:main}. This (nontrivial) comparison is done in the following section.

	\textbf{Step 1.} We use
	\begin{eqnarray*}
		\sum_n a_n g(n)=\frac{1}{2\pi i} \int_{(a)} \sum_{n=1}^\infty \frac{a_n}{n^w} \left(\int_0^\infty g(t) t^{w-1}~dt\right) dw \end{eqnarray*}
	for an appropriate value of $a$. 
	
	Let $\mathcal S_A^{\ne 0}(D;\ell)$ be the contribution of the non-zero $k$ terms in $\mathcal S'_A(D;\ell)$. We have
	\begin{align*}
		&\mathcal S_A^{\ne 0}(D;\ell) \\ &=\frac{D}{2\pi i} \int_{(\frac 12)} \sum_{(n,2)=1} n^{-w} \frac{\tau_A(n)}{\sqrt{n}}\frac{1}{2n\ell}
		\sum_{(c,2n\ell)=1\atop c\le Y}\frac{\mu(c)}{c^2}\sum_{k\ne 0}  (-1)^k G_k(n\ell)  \int_0^\infty  W\left(\frac t N\right)\tilde \Psi\left(\frac {kD}{2c^2t\ell}\right)
		t^{w-1}~dt ~dw
	\end{align*}
	
	We rewrite the integral over $t$  above using the following lemma.
	\begin{lemma} \label{lemma:rewritten integral} 
		For any $\gamma$ with $\Re w< \gamma $ we have
		\begin{eqnarray*}
			\int_0^\infty W\left(\frac t N\right) \tilde{\Psi}\left(\frac{C}{ t}\right) t^{w-1}~dt
			= \frac1{2\pi i} \int_{(\gamma)} \breve{W}(s)\breve{\Psi}(1-s+w) X(1-s+w)N^sC^{w-s}
			~ds
		\end{eqnarray*}
		where 
		$$X(1-s)=(2\pi)^{-s}\Gamma(s)\left( \cos\frac{\pi s}{2}+\sin\frac{\pi s}{2}\right).$$ 
	\end{lemma}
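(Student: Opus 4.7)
The plan is to insert the Mellin inversion formula for $W(t/N)$ and reduce the claim to a Mellin-type identity involving $\tilde\Psi$. Writing $W(t/N) = \frac{1}{2\pi i}\int_{(\gamma)} \breve{W}(s) N^s t^{-s}\, ds$, the left-hand side of the lemma becomes (formally)
\[
\frac{1}{2\pi i} \int_{(\gamma)} \breve{W}(s)\, N^s \int_0^\infty \tilde{\Psi}(C/t)\, t^{w-s-1}\, dt\, ds.
\]
The inner integral, after the substitution $u = C/t$, equals $C^{w-s}\int_0^\infty \tilde{\Psi}(u) u^{s-w-1}\, du$. Thus the lemma reduces, with $z = s-w$, to proving the auxiliary identity
\[
\int_0^\infty \tilde{\Psi}(u) u^{z-1}\, du = X(1-z)\, \breve{\Psi}(1-z).
\]

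To establish this sub-identity I would unfold the definition $\tilde{\Psi}(u) = \int_0^\infty \Psi(v)(\cos 2\pi uv + \sin 2\pi uv)\, dv$ (using that $\Psi$ is supported in $(0,\infty)$), swap the order of integration, and apply the classical Mellin transforms
\[
\int_0^\infty u^{z-1}\cos(2\pi v u)\, du = (2\pi v)^{-z}\Gamma(z)\cos(\pi z/2),
\]
together with the corresponding formula for $\sin$, valid in the strip $0<\Re z<1$. Pulling the $v$-integration back out reconstructs $\breve{\Psi}(1-z) = \int_0^\infty \Psi(v)v^{-z}\,dv$ and assembles the trigonometric factors into exactly $X(1-z)$. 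Both sides of the target identity are meromorphic in $z$ on all of $\mathbb{C}$ (the right side via entireness of $\breve{\Psi}$ and the standard continuation of $\Gamma$), so analytic continuation extends the identity beyond the classical strip, and in particular the formula is available for $z = s-w$ along the contour $\Re s = \gamma > \Re w$.

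The main technical obstacle is justifying the interchanges of integration, since the sine/cosine Mellin transforms converge only conditionally. I would handle this by combining three standard bounds: rapid decay of $\breve{W}(s)$ on vertical lines (from $W\in C_c^\infty(\mathbb{R}^+)$), rapid decay of $\tilde\Psi(u)$ as $u\to\infty$ obtained by repeated integration by parts in its integral definition, and rapid decay of $\breve\Psi$ on vertical lines. These render all double integrals absolutely convergent once $\Re(s-w)>0$, so Fubini applies; the hypothesis $\Re w < \gamma$ in the lemma is precisely what guarantees this positivity. Everything else is bookkeeping — in particular the placement of $\gamma$ within the strip $0<\Re(s-w)<1$ for the direct cosine/sine transform step, after which analytic continuation delivers the final formula without restriction on $\gamma$ beyond $\Re w<\gamma$.
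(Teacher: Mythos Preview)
Your proposal is correct and follows essentially the same approach as the paper's proof: both insert Mellin inversion for $W(t/N)$, separate variables by a substitution, and identify the resulting integrals as $\breve{\Psi}(1-s+w)$ and the cosine/sine Mellin transform $X(1-s+w)$, with analytic continuation handling the extension in $\gamma$. The only cosmetic difference is the order of operations---the paper substitutes $v = Cu/t$ inside the $\tilde\Psi$ integral before applying Mellin inversion, whereas you apply Mellin inversion first and then substitute $u = C/t$ in the outer integral---but the two routes are equivalent and yield the same computation.
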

	
	\begin{proof}
		The integral in question is 
		\begin{eqnarray*}
			\int_0^\infty  W\left(\frac t N\right) t^{w-1} \int_{-\infty}^\infty \Psi(u) (\cos 2\pi u \frac{C}{ t} +\sin 2\pi u \frac{C}{ t}) ~du ~dt
		\end{eqnarray*}
		We substitute $v=\frac{C}{ t} u$ and have (recall $\Psi$ is supported on $[1,2]$)
		\begin{eqnarray*}
			\int_0^\infty  W\left(\frac t N\right) t^{w-1} \int_0^\infty \Psi\left(\frac{t v}{C}\right) (\cos 2\pi v +\sin 2\pi v)
			\frac{ t}{C} ~dv ~dt
		\end{eqnarray*}
		We use 
		$$  W\left(\frac t N\right)=\frac{1}{2\pi i}\int_{( \gamma )} \breve W(s) \left(\frac tN\right)^{-s} ~ds$$
		where we initially choose $\gamma < 1+\Re w$
		and bring  the $t$-integral to the inside; it  is
		$$\int_0^\infty \Psi\left(\frac{ t v}{C}\right) t^{w-s+1}\frac{dt}{t}=
		\breve\Psi(w-s+1) \left(\frac{C}{ v}\right) ^{w-s+1}.$$
		The $v$-integral is 
		$$\int_0^\infty v^ {s-w-1}(\cos 2\pi v+\sin 2\pi z) ~dv=  X(1-s+w).$$
		The lemma now follows upon collecting these results with a larger range of $\gamma$ permitted by analytic continuation. 
	\end{proof}

	Let
	\begin{eqnarray*}
		U^\ast(s,w;Y):=  \sum_{(c,2\ell)=1\atop c\le Y}  \frac{\mu(c)}{c^{2-2s+2w}}U^\ast_c(s,w)
	\end{eqnarray*}
	where
	\begin{eqnarray*}
		U^\ast_c(s,w):= \sum_{(n,2c)=1}  \frac{\tau_A(n)}{n^{1+w}} 
		\sum_{k\ne 0}  (-1)^k \frac{G_k(n\ell)/\sqrt{n}}{ k^{s-w}}
	\end{eqnarray*}
	Then  
	\begin{eqnarray*}
		\mathcal S_A^{\ne 0}(D;\ell) = \frac{1}{(2\pi i)^2} \int_{(\frac 12 )}   \int_{( 2 )} \breve{W}(s)\breve{\Psi}(1-s+w) X(1-s+w)N^s  U^\ast(s,w;Y) \left( \frac{D}{2\ell}\right)^{1-s+w}
		~ds~dw 
	\end{eqnarray*}
	
	\textbf{Step 2.} Now we begin the process of analytic continuation of $ U^\ast_c(s,w)$ in $s$ and $w$. In this step, we analytically continue in $w$, and shift the contour in $w$ to extract residues. The residues which are extracted will correspond to square values of $k$ in the sums above.
	
	Note that  $G_{k}(n)=G_{4k}(n)$ for odd $n$. For any non-zero integer $k$  we can write $4k=k_1 k_2^2$ uniquely where $k_1$ is a fundamental discriminant ($k_1=1$ is allowed) and where $k_2$ is positive. 
	\begin{lemma} \label{lemma:tau G continuation}
		We have
		\begin{eqnarray*} 
			\sum_{(n,2c)=1} \frac{\tau_A(n) G_{4k}(n\ell)/\sqrt{n}}{n^{1+w}}
			=\mathcal L_A(1+w,\chi_{k_1})  \mathcal{G}_A(1+w;k,\ell,c)
		\end{eqnarray*}
		where
		$ \mathcal{G}_A(w+1;k,\ell,c)$ is holomorphic for $\Re w>-1/2+\max_{\alpha \in A}|\Re \alpha|$ and is $\ll c^\epsilon k^\epsilon \ell^{\frac12+\epsilon}(\ell,k_2^2)^{1/2}$ there, for any $\epsilon > 0$.
	\end{lemma}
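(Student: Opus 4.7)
The plan is to expand the Dirichlet series as an Euler product, recognize $\mathcal{L}_A(1+w,\chi_{k_1})$ as the contribution from generic primes, and verify that the remaining ratio continues analytically into the required half-plane.

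Since $\tau_A$ and $m\mapsto G_{4k}(m)$ are multiplicative in $m$, and the constraint $(n,2c)=1$ is local, I would factor the left-hand side as
\[
\prod_{\substack{p\mid c\\ p\text{ odd}}} G_{4k}(p^{\nu_p(\ell)}) \cdot \prod_{p\nmid 2c} \left( \sum_{a\geq 0} \frac{\tau_A(p^a)\, G_{4k}(p^{a+\nu_p(\ell)})}{p^{a(3/2+w)}} \right).
\]
At a prime $p\nmid 2kc\ell$ one has $\kappa_p=\nu_p(\ell)=0$, so the case table for $G_k(p^\mu)$ leaves only $a\in\{0,1\}$, yielding local factor $1+\tau_A(p)\chi_{k_1}(p)/p^{1+w}$, since $\bigl(\tfrac{k}{p}\bigr)=\chi_{k_1}(p)$ at such primes (using $4k=k_1k_2^2$ with $k_1$ a fundamental discriminant). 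This matches the Euler factor of $\mathcal{L}_A(1+w,\chi_{k_1})$ up to a multiplicative correction of the form $1+O(p^{-2-2\Re w-2\min_\alpha\Re\alpha})$. I would then simply \emph{define} $\mathcal{G}_A(1+w;k,\ell,c)$ as the ratio of the original sum to $\mathcal{L}_A(1+w,\chi_{k_1})$; by the above, the resulting Euler product converges absolutely as soon as $\Re w>-1/2+\max_\alpha|\Re\alpha|$, giving the claimed analytic continuation.

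For the bound on $\mathcal{G}_A$ the local factors at ramified primes must be controlled using the explicit formulas from Lemma~\ref{lemma:poisson}. The crude estimate $|G_{4k}(p^\mu)|\leq p^\mu$, together with the vanishing $G_{4k}(p^\mu)=0$ for $\mu\geq\kappa_p+2$, keeps the product finite. Primes dividing $c$ but not $\ell$ give trivial factors, and primes dividing $ck$ contribute only a divisor-style $c^\epsilon k^\epsilon$ after collecting the finitely many nontrivial terms. The delicate regime is $p\mid\ell$: in the Gauss-sum case $\mu=\kappa_p+1$ with $\mu$ odd one has $|G_k(p^\mu)|=p^{\mu-1/2}$, losing $p^{\nu_p(\ell)/2}$ per such prime, which multiplies across $p\mid\ell$ to the $\ell^{1/2+\epsilon}$ factor, while in the range $\mu\leq\kappa_p$ (i.e.\ when $p\mid\ell$ and $p$ divides $k$ to a large power) one is forced into the non-oscillatory bound $\phi(p^\mu)\leq p^\mu$; assembling the primes where this occurs into a joint divisibility condition produces the $(\ell,k_2^2)^{1/2}$ correction, since $\nu_p(k_2^2)$ equals $\kappa_p$ or $\kappa_p-1$ for odd $p$.

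The main obstacle is the careful case analysis across the four regimes of the $G$-formula at primes simultaneously dividing $\ell$ and $k$; the remainder of the argument is routine Euler product manipulation and the standard passage from absolute convergence of an Euler product to pointwise bounds by $d(\cdot)^\epsilon$-type estimates on the bad-prime factors.
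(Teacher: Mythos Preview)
Your approach is correct and is exactly the one the paper has in mind: the paper gives no detailed argument for this lemma, writing only that ``the proof is essentially the same as that of Lemma 5.3 of \cite{Sou}.'' Your Euler-product factorization, identification of the generic local factor $1+\tau_A(p)\chi_{k_1}(p)/p^{1+w}$, and subsequent case analysis at primes dividing $ck\ell$ reproduce Soundararajan's argument, so nothing further is needed here.
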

	The proof is essentially the same as that of Lemma 5.3 of \cite{Sou}.
	
	So now 
	$$  U^\ast_c(s,w)=   \sum_{k\ne 0}\frac{(-1)^k }{k^{s-w}} \mathcal L_A(w+1,\chi_{k_1})  \mathcal{G}_A(w+1; k,\ell,c).$$
	
	Now $\mathcal L_A(w+1,\chi_{k_1})$ is entire unless
	$k_1=1$ in which case $\mathcal L_A(w+1,\chi_{1}) = Z_A(w+1)$ which has poles at $w=-\alpha$ for $\alpha \in A$. 
	We move the paths of integration now, first we move $w$ to the line $\Re w=-1/2+\delta$ and then we move $s$ to $\Re s=1/2 +2\delta$.  
	
	In doing so we cross poles at $w=-\alpha$ when $k_1=1$ (i.e when $k=\square$).  Choose $\epsilon > 0$ and then $\delta >0$ sufficiently small based on $\epsilon$. By Lemma 2 and the Lindel\"{o}f hypothesis the integral on the new path is $\ll N^{1/2}D^{\epsilon}Y^{1+\epsilon}\ell^{\frac 12+2\epsilon} $, where again we suppose $N$ grows no more than polynomially in $D$.
	
	Thus,
	$$ \mathcal S_A^{\ne 0}(D;\ell) = \mathcal S_A^{\square}(D;\ell) +O(N^{1/2}D^{\epsilon}Y^{1+\epsilon}\ell^{\frac 12+\epsilon})$$
	where
	\begin{eqnarray} \label{eqn:S1}  \mathcal S_A^{\square}(D;\ell) &:=& \nonumber
		\sum_{\alpha\in A}  \frac 1{2\pi i} \int_{( \frac 12 +\delta)}    \breve{W}(s)\breve{\Psi}(1-s-\alpha) X(1-s-\alpha)N^s \\&& \qquad 
		\times   \operatornamewithlimits{Res}_{w=-\alpha} U(s,w;Y)  \left( \frac{D}{2\ell}\right)^{1-s-\alpha}
		~ds.
	\end{eqnarray}
	where
	\begin{eqnarray*}
		U(s,w;Y):= 
		\sum_{(c,2\ell)=1\atop c\le Y}\frac{\mu(c)}{c^{2-2s+2w}} U_c(s,w),\end{eqnarray*}
	with 
	$$
	U_c(s,w) := \sum_{k=1}^\infty (-1)^{k^2}{k^{2(s-w)}} Z_A(w+1)\mathcal{G}_A(w+1;k,\ell,c)
	$$
	defined for $\Re w > -1/2+\delta$ and $\Re(s-w) > 1/2$.
	
	Now consider the region $\Re w > \max_{\alpha \in A} |\Re \alpha|$ and $\Re (s-w) > 1/2$. In this region we may expand this expression as a Dirichlet series and we have
	\begin{align} 
		\label{eqn:euler2}
		\notag U_c(s,w) &= \sum_{(n,2c)=1} \frac{\tau_A(n)}{n^{1+w}}   
		\sum_{k=1}^\infty  (-1)^k \frac{G_{k^2}(n\ell) /\sqrt{n}}{k^{2s-2w}}\\
		\notag  &   = -(1-2^{1+2w-2s}) \sum_{(n,2c)=1} \frac{\tau_A(n)}{n^{1+w}} 
		\sum_{k=1}^\infty   \frac{G_{k^2}(n\ell) /\sqrt{n}}{k^{2s-2w}} \\
		&= - \frac{1-2^{1+2w-2s}}{1-2^{2w-2s}} \sum_{(n,2c)=1} \frac{\tau_A(n)}{n^{1+w}} 
		\sum_{k \; \mathrm{odd}}  \frac{G_{k^2}(n\ell) /\sqrt{n}}{k^{2s-2w}},
	\end{align}
	where in the last two lines we have again used the property $G_k(m)=G_{4k}(m)$ as long as $m$ is odd. Note that in this range of $w$ and $s$, both the sum over $n$ and the sum over $k$ will be absolutely convergent.  
	
	\textbf{Step 3.} We now analytically continue the expression $\operatornamewithlimits{Res}_{w = -\alpha} U_c(s,w)$ in $s$. This will allow us analytically continue $\operatornamewithlimits{Res}_{w = -\alpha} U(s,w;Y)$ as well, which in turn allows us to shift the contour of $s$ in \eqref{eqn:S1} to a vertical line with small real part. This enables us to complete the sum over $c$ in $U(s,w;Y)$. Because the resulting expression is a complete sum in all parameters, it will be easier to treat using Euler products in the next section.
	
	It will be convenient to isolate an Euler product with only odd prime factors. For that reason we define the functions
	\begin{equation}
		\label{eq:odd_zeta}
		\zeta^{[2]}(s) = (1-2^{-s})\zeta(s), \quad Z^{[2]}_A(s) = \prod_{\alpha \in A} \zeta^{[2]}(\alpha+s),
	\end{equation}
	which are defined so as for $\Re s > 1$ to have the usual Euler products with the local factor at the prime $2$ removed.
	
	\begin{lemma}
		\label{lemma:singularity location}
		Let $a$ be a small fixed constant. Then in the region $\Re w > 0$ and $\Re(s-w) > 1/2$, for odd coprime $c$ and $\ell$,
		\begin{equation}
			\label{eq:Uc(s,w) continuation}
			U_c(s,w) = -\frac{1-2^{1+2w-2s}}{1-2^{2w-2s}} Z^{[2]}_A(1+w) \zeta^{[2]}(2s-2w) V_c(s,w;\ell),
		\end{equation}
		where $V_c(s,w;\ell)$ is analytic in $s$ and $w$ for $a \leq \Re s$ and $-a/4 \leq \Re w \leq \Re s -a/2$ and in this region (i) has a convergent Euler product over only odd primes, and (ii) satisfies $V_c(s,w;\ell) \ll_\epsilon c^\epsilon \ell^{1+\epsilon}$, where $\epsilon > 0$ can be chosen arbitrarily.
		
		Consequently for odd coprime $c$ and $\ell$, the function $U_c(s,w)$ has an analytic continuation to this region and
		\begin{equation}
			\label{eq:ResUc(s,w) continuation}
			\operatornamewithlimits{Res}_{w = -\alpha} U_c(s,w) = -\frac{1}{2} \cdot \frac{1-2^{1-2\alpha-2s}}{1-2^{-2\alpha-2s}} Z^{[2]}_{A'}(1-\alpha) \zeta^{[2]}(2s+2\alpha) V_c(s,-\alpha;\ell),
		\end{equation}
		for $A':= A \setminus \{\alpha\}$.
	\end{lemma}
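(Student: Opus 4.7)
The plan is to establish the factorization \eqref{eq:Uc(s,w) continuation} by expanding $U_c(s,w)$ as a Dirichlet series in its region of absolute convergence, recognizing it as an Euler product over odd primes $p \nmid c$, and then pulling out the singular factors matching $\zeta^{[2]}(2s-2w)$ and $Z^{[2]}_A(1+w)$. The residue formula \eqref{eq:ResUc(s,w) continuation} will then follow by a direct computation of $\operatornamewithlimits{Res}_{w=-\alpha} Z_A^{[2]}(1+w)$.

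\textbf{Euler product structure.} Starting from the last line of \eqref{eqn:euler2}, since $\tau_A$ is multiplicative in $n$ and $G_k$ is multiplicative in its second argument, the double sum factors over odd primes $p \nmid c$:
\begin{equation*}
\sum_{(n,2c)=1}\sum_{k\text{ odd}} \frac{\tau_A(n)\, G_{k^2}(n\ell)/\sqrt{n}}{n^{1+w}\, k^{2s-2w}}  \;=\; \prod_{\substack{p \text{ odd}\\ p\nmid c}} L_p(s,w;\ell),
\end{equation*}
where
\begin{equation*}
L_p(s,w;\ell) \;=\; \sum_{\mu,\kappa \geq 0} \frac{\tau_A(p^\mu)\, G_{p^{2\kappa}}(p^{\mu + \nu_p(\ell)}) / p^{\mu/2}}{p^{\mu(1+w) + 2\kappa(s-w)}}.
\end{equation*}
This Euler product converges absolutely for $\Re w > \max_\alpha |\Re\alpha|$ and $\Re(s-w) > 1/2$, matching the range in which \eqref{eqn:euler2} was derived.

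\textbf{Extraction of singular factors.} Using the piecewise formulas for $G_k(p^\nu)$ from Lemma~\ref{lemma:poisson}, one sees that at each odd prime only a bounded list of $(\mu,\kappa)$ pairs contributes nontrivially once either index is large. For $p \nmid \ell$, the $\mu = 0$ terms give the geometric series $(1-p^{-2(s-w)})^{-1}$ (since $G_{p^{2\kappa}}(1)=1$), and the $\kappa = 0$ terms recover the standard local factor $\prod_{\alpha\in A}(1-p^{-1-w-\alpha})^{-1}$ of $Z_A^{[2]}(1+w)$ via $G_1(p^\mu)$. This motivates writing
\begin{equation*}
L_p(s,w;\ell) \;=\; \frac{1}{1-p^{-2(s-w)}} \prod_{\alpha \in A} \frac{1}{1-p^{-(1+w+\alpha)}} \cdot M_p(s,w;\ell),
\end{equation*}
and defining $V_c(s,w;\ell) := \prod_{p\text{ odd},\, p\nmid c} M_p(s,w;\ell)$. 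A direct expansion shows that for $p \nmid \ell$ one has $M_p(s,w;\ell) = 1 + O(p^{-1-\delta})$ uniformly in the region $a \leq \Re s$, $-a/4 \leq \Re w \leq \Re s - a/2$, for some $\delta > 0$ depending on $a$; hence the restricted Euler product converges absolutely and defines an analytic function in that region. At primes $p \mid \ell$ the local factor $L_p$ is a truncated sum of size $O(p^{C\nu_p(\ell)})$, and dividing by the (bounded) singular prefactor still leaves a contribution of that order; taking the product over all $p \mid \ell$ yields the claimed bound $V_c(s,w;\ell) \ll_\epsilon c^\epsilon \ell^{1+\epsilon}$.

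\textbf{Residue at $w = -\alpha$.} Since $\zeta^{[2]}(s) = (1-2^{-s})\zeta(s)$ has residue $1/2$ at $s=1$, the factor $Z_A^{[2]}(1+w) = \prod_{\alpha'\in A}\zeta^{[2]}(1+w+\alpha')$ has a simple pole at $w=-\alpha$ with residue $\tfrac12 Z_{A'}^{[2]}(1-\alpha)$, where $A' = A\setminus\{\alpha\}$. All remaining factors on the right side of \eqref{eq:Uc(s,w) continuation}, namely $-(1-2^{1+2w-2s})/(1-2^{2w-2s})$, $\zeta^{[2]}(2s-2w)$, and $V_c(s,w;\ell)$, are analytic at $w=-\alpha$ throughout the specified region in $s$. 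Multiplying these values gives precisely \eqref{eq:ResUc(s,w) continuation}.

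\textbf{Main obstacle.} The principal technical step is the verification that $M_p(s,w;\ell) = 1 + O(p^{-1-\delta})$ uniformly in the strip, which amounts to checking that the first-order corrections in $p^{-(2s-2w)}$ and in $p^{-(1+w+\alpha)}$ coming from the expansion of $L_p$ exactly match those of the conjectured singular prefactor. This reduces to a finite comparison using $G_1(p) = \sqrt{p}$ and $G_{p^{2\kappa}}(1) = 1$, but must be carried out carefully — especially for primes $p \mid \ell$, where the naive local factor is altered but the singular prefactor is not, forcing the discrepancy to be absorbed into the polynomial $\ell^{1+\epsilon}$ bound on $V_c$.
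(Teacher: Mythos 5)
Your route is the same as the paper's (Euler product via the two–variable multiplicativity of $G_{k^2}(n)$, extraction of the singular factors from the explicit values of $G_{p^{2\kappa}}(p^m)$, residue of $\zeta^{[2]}$ at $1$ equal to $1/2$), but there is a concrete misstep in the factorization: the double sum does \emph{not} factor over odd primes $p\nmid c$ only. The coprimality condition $(n,2c)=1$ restricts $n$ but not $k$; at a prime $p\mid c$ the $p$-part of $k$ is unrestricted and, since $(c,\ell)=1$ forces $\nu_p(\ell)=0$ and $G_{p^{2\kappa}}(1)=1$, it contributes the degenerate local factor $\sum_{\kappa\ge 0}p^{-2\kappa(s-w)}=\bigl(1-p^{-(2s-2w)}\bigr)^{-1}$. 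So $U_c$ is a product over \emph{all} odd primes, with these factors at $p\mid c$; this is exactly the term $\prod_{p\mid c}\bigl(\sum_{k\ge 0}p^{-k(2s-2w)}\bigr)/\mathcal{H}_p(s,w;1)$ carried along in the paper's equation \eqref{eq:Uc(s,w) euler}. With your definition $V_c=\prod_{p\nmid 2c}M_p$, the identity you actually derive has $\zeta^{[2]}(2s-2w)$ and $Z^{[2]}_A(1+w)$ with their Euler factors at $p\mid c$ deleted, which is not \eqref{eq:Uc(s,w) continuation}. The repair is routine: reinstating the $p\mid c$ factors completes $\zeta^{[2]}(2s-2w)$, and you must then absorb $\prod_{p\mid c}\prod_{\alpha\in A}\bigl(1-p^{-(1+w+\alpha)}\bigr)$ into $V_c$ to complete $Z^{[2]}_A(1+w)$; this bounded factor is $\ll c^{\epsilon}$ and is in fact the source of the $c$-dependence (hence the $c^{\epsilon}$) in the lemma — in your version $V_c$ had no genuine dependence on $c$, which should have been a warning sign.

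A smaller imprecision: the $\kappa=0$ terms do not by themselves ``recover the standard local factor'' of $Z^{[2]}_A(1+w)$, because $G_1(p^\mu)=0$ for $\mu\ge 2$; they give only $1+\tau_A(p)p^{-1-w}$. The even-power terms $\tau_A(p^{2m})p^{-2m(1+w)}$ arise from $\kappa\ge m$ via $G_{p^{2\kappa}}(p^{2m})=\phi(p^{2m})$ together with the geometric factor $\bigl(1-p^{-(2s-2w)}\bigr)^{-1}$, as in the paper's evaluation of $\mathcal{H}_p(s,w;1)$. You correctly flag this verification, and the bound $\mathcal{H}_p(s,w;\ell)\ll p^{\nu_p(\ell)}$ needed for the $\ell^{1+\epsilon}$ estimate, as the main technical content; your residue computation at $w=-\alpha$ is correct and matches the paper.
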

	
	\begin{proof}
		Note that $f(k,n)=G_{k^2}(n)$ is a multiplicative function of two variables; i.e. we have $f(k_1k_2,n_1n_2)=f(k_1,n_1)f(k_2,n_2)$ whenever $(k_1n_1,k_2n_2)=1$. 
		This allows us to express $U_c(s,w)$ as an Euler product. Let
		$$
		\mathcal{H}_p(s,w;\ell) = \sum_{n,k=0}^\infty \sum_{k,n=0}^\infty \frac{\tau_A(p^n)G_{p^{2k}}(p^{n+\nu_p(\ell)})}{p^{nw+n/2+2k (s-w)}},
		$$
		where $\nu_p(\ell) := \max \{k:\; p^k | \ell\}$ is the $p$-adic valuation of $\ell$. We calculate from \eqref{eqn:euler2} that for $\Re w > 0$ and $\Re(s-w) > 1/2$, as long as $c$ and $\ell$ are coprime odd numbers,
		\begin{align}
			\label{eq:Uc(s,w) euler}
			\notag U_c(s,w) &= -\frac{1-2^{1+2w-2s}}{1-2^{2w-2s}} \prod_{p \nmid 2c\ell} \mathcal{H}_p(s,w;1) \prod_{p | c} \sum_{k=0}^\infty \frac{1}{p^{k(2s-2w)}} \prod_{p | \ell} \mathcal{H}_p(s,w;\ell) \\
			&= -\frac{1-2^{1+2w-2s}}{1-2^{2w-2s}}  \prod_{p>2} \mathcal{H}_p(s,w;1) \prod_{\substack{p|c}} \frac{ \sum_{k\geq 0} 1/p^{k(2s-2w)}}{\mathcal{H}_p(s,w;1) } \prod_{p | \ell} \frac{\mathcal{H}_p(s,w;\ell)}{\mathcal{H}_p(s,w;1)}
		\end{align}
		
		Using the values in Lemma \ref{lemma:poisson} to evaluate $G_{p^{2k}}(p^n)$, we find for $a \leq \Re s $ and $-a/4 \leq \Re w \leq \Re s - a/2$, if we let $v = 2s-2w$ (so $\Re v \geq a$),
		\begin{align*}
			\mathcal{H}_p(s,w;1) &=\Big(1 - \frac{1}{p^v}\Big)^{-1} \Big(1 + \frac{\tau_A(p^2) \phi(p^2)}{p^{3+2w} p^v} + \frac{\tau_A(p^4) \phi(p^4)}{p^{6+4w} p^{2v}} + \frac{\tau_A(p^6) \phi(p^6)}{p^{9+6w} p^{3v}}+\cdots \Big) \\
			&\quad\quad + \Big( \frac{\tau_A(p)}{p^{1+w}} + \frac{\tau_A(p^3)}{p^{2+2w}} + \frac{\tau_A(p^5)}{p^{3+5w}} +\cdots \Big)\\
			&= \Big(1 - \frac{1}{p^v}\Big)^{-1} \Big(1 + \frac{\tau_A(p)}{p^{1+w}} + \frac{\tau_A(p^2)}{p^{2(1+w)}}+\cdots\Big)\Big(1 + O\Big(\frac{1}{p^{1+\delta}}\Big)\Big),
		\end{align*}
		for sufficiently small $\delta > 0$ as long as $a$ is sufficiently small. From these terms the factor $Z^{[2]}_A(1+w) \zeta^{[2]}(2s-2w)$ can be extracted. 
		
		Note furthermore that this implies in this region of $w$ and $s$ that $\mathcal{H}_p(s,w;1) = e^{O_a(1)}$. We can use the same argument and the values in Lemma \ref{lemma:poisson} to see that $\mathcal{H}_p(s,w;\ell) \ll_a p^{\nu_p(\ell)}$. And finally it is easy to see that $\sum 1/p^{k(2s-2w)} = e^{O_a(1)}$. 
		
		If we return to \eqref{eq:Uc(s,w) euler} using this information and the fact that number of primes factors of $c$ is $o(\log c)$ (and likewise for $\ell$) we obtain \eqref{eq:Uc(s,w) continuation}.
		
		The claim \eqref{eq:ResUc(s,w) continuation} then follows immediately as $U_c(s,w)$ has a simple pole for $w$ at $-\alpha$, coming from the factor $\zeta^{[2]}(1+w+\alpha)$ in $Z^{[2]}_A(1+w)$. Note that $\zeta^{[2]}(1+w+\alpha)$ has a residue of $1/2$ at $w = -\alpha$.
	\end{proof}
	
	Return to \eqref{eqn:S1}. We may push the contour from the line $\Re s = 1/2+\delta$ to $\Re s = a'$ for $a'$ any small fixed positive constant, using \eqref{eq:ResUc(s,w) continuation}. Note that for all $c$, the function $\operatornamewithlimits{Res}_{w = -\alpha} U_c(s,w)$ has no poles for $s$ in between these lines; the simple pole of the function $\zeta(2s+2\alpha)$ at $2s+2\alpha = 1$ is canceled by the zero of $(1-2^{1-2\alpha-2s})$ at this point. Now bring the sum over $\alpha \in A$ inside the integral, and note from the singularity structure of \eqref{eq:Uc(s,w) continuation} we may write,
	$$
	\sum_{\alpha \in A} \operatornamewithlimits{Res}_{w = -\alpha} U_c(s,w) = \frac{1}{2\pi i} \int_{\gamma} U_c(s,w)\, dw,
	$$
	where $\gamma = \cup \gamma_\alpha$ is a union of sufficiently small contours encircling each $\alpha \in A$.
	
	Now along the line $\Re s = a'$ we complete the sum in $c$. By Lemma \ref{lemma:singularity location} and the above expression for $\sum \operatornamewithlimits{Res} U_c(s,w)$, one sees that this introduces an error term of size $\ll D^{1+\epsilon} \ell^\epsilon /Y^{1-\epsilon}$ as long as $a'$ is sufficiently small (depending on $\epsilon$).
	
	Hence from \eqref{eqn:S1}, what we have shown is that for sufficiently small positive $a'$ (depending on $\epsilon$),
	
	\begin{multline}
		\label{eq:S_on_line_a}
		\mathcal S_A^{\ne 0}(D;\ell) = \sum_{\alpha\in A}  \frac 1{2\pi i} \int_{(a')}    \breve{W}(s)\breve{\Psi}(1-s-\alpha) X(1-s-\alpha)N^s  \operatornamewithlimits{Res}_{w=-\alpha} U(s,w)  \left( \frac{D}{2\ell}\right)^{1-s-\alpha}
		~ds \\ +O(N^{1/2}D^{\epsilon}Y^{1+\epsilon}\ell^{\frac 12+\epsilon} + D^{1+\epsilon} \ell^\epsilon /Y^{1-\epsilon}),
	\end{multline}
	where $U(s,w) := \lim_{Y\rightarrow\infty} U(s,w;Y)$ is well-defined in the region of this integral.
	
	\textbf{Step 4.} We now rewrite the expression above in a way that more closely resembles the $1$-swap terms we are seeking.
	
	\begin{lemma}
		\label{lem:func_eq}
		For $\Re s = a'$ with $a' > 0$ sufficiently small and all elements of the set $A$ with sufficiently small imaginary parts, and odd $\ell$,
		\begin{multline*}
			X(1-s-\alpha) \operatornamewithlimits{Res}_{w=-\alpha} U(s,w) \frac{1}{(2\ell)^{1-s-\alpha}} \\
			= 8^{s+\alpha-1} \chi(1/2+s+\alpha) \frac{\ell^{s+\alpha-1}}{2} \zeta^{[2]}(1-2s-2\alpha) Z_{A'}^{[2]}(1-\alpha) \sum_{(c,2\ell)=1} \frac{\mu(c)}{c^{2-2s-2\alpha}}V_c(s,-\alpha;\ell).
		\end{multline*}
	\end{lemma}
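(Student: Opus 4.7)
The plan is to peel off the large common factor on both sides using Lemma \ref{lemma:singularity location}, then reduce what remains to a clean Riemann-style functional-equation identity.

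First I would observe that, because $\mu(c)/c^{2-2s+2w}$ is analytic in $w$, the residue may be passed inside the sum:
$$
\operatornamewithlimits{Res}_{w=-\alpha} U(s,w) = \sum_{(c,2\ell)=1}\frac{\mu(c)}{c^{2-2s-2\alpha}} \operatornamewithlimits{Res}_{w=-\alpha} U_c(s,w).
$$
Substituting the expression \eqref{eq:ResUc(s,w) continuation} for $\operatornamewithlimits{Res}_{w=-\alpha} U_c(s,w)$ into the LHS of the lemma, the factor $Z_{A'}^{[2]}(1-\alpha) \sum_{(c,2\ell)=1} \mu(c)c^{2s+2\alpha-2} V_c(s,-\alpha;\ell)$ appears in both sides and may be divided out. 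Setting $\beta := s+\alpha$, and noticing that the powers of $\ell$ cancel (the LHS contributes $(2\ell)^{\beta-1}$ and the RHS contributes $\ell^{\beta-1}/2$), the lemma is reduced to an identity purely among $X(1-\beta)$, $\chi(1/2+\beta)$, $\zeta^{[2]}(2\beta)$, $\zeta^{[2]}(1-2\beta)$, and powers of $2$.

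Next I would use the defining identity $\zeta^{[2]}(s) = (1-2^{-s})\zeta(s)$ to convert both modified zetas to ordinary $\zeta$'s. The denominator $1-2^{-2\beta}$ inside the factor $\frac{1-2^{1-2\beta}}{1-2^{-2\beta}}$ on the LHS is absorbed by the analogous factor in $\zeta^{[2]}(2\beta)$, leaving $(1-2^{1-2\beta})\zeta(2\beta)$; similarly $\zeta^{[2]}(1-2\beta) = (1-2^{2\beta-1})\zeta(1-2\beta)$ on the RHS. The elementary algebraic identity $1-2^{1-2\beta} = -2^{1-2\beta}(1-2^{2\beta-1})$ then allows one to cancel $(1-2^{2\beta-1})$ from both sides, after which what remains is an identity of the form
$$
X(1-\beta)\,\zeta(2\beta) = 2^{c(\beta)}\,\chi(1/2+\beta)\,\zeta(1-2\beta)
$$
for an explicit prefactor $2^{c(\beta)}$ that is a simple power of $2$ obtained from bookkeeping.

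Finally I would verify this identity using the Riemann functional equation $\zeta(2\beta) = \chi(2\beta)\zeta(1-2\beta)$, which reduces the claim to a relation among the explicit gamma-trig factors $X(1-\beta)$, $\chi(2\beta)$, and $\chi(1/2+\beta)$. The trigonometric identity $\cos(\pi\beta/2)+\sin(\pi\beta/2) = \sqrt{2}\sin(\pi/4 + \pi\beta/2)$ matches the $(\cos+\sin)$ piece in $X(1-\beta)$ to the sine piece appearing in $\chi(1/2+\beta)$; the Legendre duplication formula $\Gamma(2z) = 2^{2z-1}\pi^{-1/2}\Gamma(z)\Gamma(z+1/2)$ applied with $z = 1/2-\beta$ relates $\Gamma(1-2\beta)$ (from $\chi(2\beta)$) to $\Gamma(1/2-\beta)$ (from $\chi(1/2+\beta)$); and Euler's reflection formula $\Gamma(z)\Gamma(1-z) = \pi/\sin(\pi z)$ cleans up the sines of $\pi\beta$ and $\pi\beta/2$. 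The main obstacle (and the only real work) is the careful tracking of powers of $2$ and $\pi$ through these three substitutions; once the gamma-and-trig calculation is organized, the identity reduces to a purely arithmetic equation of exponents, and a sanity check at an explicit value (for instance $\beta=1/4$) confirms the bookkeeping.
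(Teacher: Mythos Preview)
Your proposal is correct and follows essentially the same route as the paper: both substitute \eqref{eq:ResUc(s,w) continuation}, cancel the common factors $Z_{A'}^{[2]}(1-\alpha)\sum_c \mu(c)c^{2s+2\alpha-2}V_c(s,-\alpha;\ell)$ and $\ell^{s+\alpha-1}$, and reduce to a functional-equation identity among $X$, $\chi$, $\zeta^{[2]}$, and powers of $2$. The paper packages the gamma--trig calculation you outline (via $\cos+\sin=\sqrt{2}\sin(\cdot)$, Legendre duplication, and reflection) into the single ``easily proven'' identity $X(1-s-\alpha)\chi(2s+2\alpha)=4^{-\alpha-s}\chi(1/2+s+\alpha)$, and applies the functional equation directly to $\zeta^{[2]}$ rather than first stripping the Euler factor at $2$; but these are organizational choices, not substantive differences.
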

	
	Here $\chi(s) := X_+(s)$, so as usual $\zeta(s) = \chi(s)\zeta(1-s).$
	
	\begin{proof}
		Note the easily proven identity 
		\begin{eqnarray} \label{eqn:identityX} X(1-s-\alpha) \chi(2s+2\alpha)=4^{-\alpha-s }\chi(1/2+s+ \alpha).\end{eqnarray}
		We will also use \eqref{eq:ResUc(s,w) continuation} of Lemma \ref{lemma:singularity location}. Note that in \eqref{eq:ResUc(s,w) continuation} we may make the substitution
		$$
		\zeta^{[2]}(2s+2\alpha) = \frac{1-2^{-2s-2\alpha}}{1-2^{2s+2\alpha-1}} \zeta^{[2]}(1-2s-2\alpha) \chi(2s+2\alpha).
		$$
		Applying \eqref{eqn:identityX} and a little algebra, we obtain lemma.
	\end{proof}
	
	Plainly the left hand side of this identity can be directly substituted into \eqref{eq:S_on_line_a}, and because the terms $8^{s+\alpha-1} \chi(1/2+s+\alpha)$ resemble those in the $1$-swap recipe, this brings us closer to a proof of Theorem \ref{thm:main}.
	
	\section{Identities}
	\label{sec:identities}
	
	The answer we have obtained has begun to look like the contribution from $1$-swap terms. There are two remaining ingredients to complete our proof. First, in the following subsection, we apply Lemma \ref{lem:func_eq} and make use of a comparison of Euler products to get a version of Theorem \ref{thm:main} with no averaging over $d$. Then, in second subsection, we show that an average in $d$ can be extracted from the result just obtained. This allows us to compare to the claim in Theorem \ref{thm:main}.
	
	\subsection{A comparison of Euler products}
	\label{sec:euler_compare}
	
	For $\Re \alpha_i > 0$ for all $i$, define
	$$
	\tilde{\mathcal{B}}^{(2)}(A;\ell):= \sum_{\substack{n\ell = \square \\ (n,2)=1}} \frac{\tau_A(n) a_{n\ell}}{\sqrt{n}}, \quad \textrm{with}\; a_m:= \prod_{p|m} \frac{p}{p+1},
	$$
	and define $\tilde{\mathcal{B}}^{(2)}(A;\ell)$ for other parameters $\alpha_i$ by analytic continuation. In analogy with Lemma \ref{lem:B_cont}, we have
	$$
	\tilde{\mathcal{B}}^{(2)}(A;\ell) = \prod_{\alpha \in A} \zeta(1+2\alpha) \prod_{\{\alpha,\beta\} \subset A} \zeta(1+\alpha+\beta)\, \cdot \, \tilde{\mathcal{C}}^{(2)}(A;\ell),
	$$
	for $\tilde{\mathcal{C}}^{(2)}(A;\ell)$ given by an Euler product convergent as long as $\Re \alpha_i > -1/4$ for all $i$ (with the proof of Lemma \ref{lem:B_cont} applying mutatis mutandis). Thus $\tilde{\mathcal{B}}^{(2)}(A;\ell)$ may be analytically continued into this region.
	
	In this subsection we prove the following identity.
	
	\begin{lemma}
		\label{lem:matching_eulers}
		For sufficiently small $a'> 0$ and $\Re s = a'$, for odd positive integers $\ell$,
		\begin{multline}
			\label{eq:matching_eulers}
			\frac{\ell^{s+\alpha-1}}{2} \zeta^{[2]}(1-2s-2\alpha) Z_{A'}^{[2]}(1-\alpha) \sum_{(c,2\ell)=1} \frac{\mu(c) V_c(s,-\alpha;\ell)}{c^{2-2s-2\alpha}} =  \frac{\lambda}{\zeta^{[2]}(2)} \tilde{\mathcal{B}}^{(2)}((A')_s \cup \{-\alpha-s\};\ell),
		\end{multline}
		where $\lambda := 1/2$. 
	\end{lemma}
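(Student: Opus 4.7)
The plan is a prime-by-prime comparison of Euler products. Both sides are meromorphic in the shift parameters (the elements of $A'$ and $\alpha$): on the right this is by the analytic continuation of $\tilde{\mathcal{B}}^{(2)}$ noted just before the lemma, and on the left it is a consequence of Lemma \ref{lemma:singularity location}, which factors out all zeta-singularities from $V_c$, combined with the absolute convergence of the Mobius sum in $c$ for sufficiently large $\Re(s+\alpha)$. It therefore suffices to establish \eqref{eq:matching_eulers} on a region where every sum and product in sight converges absolutely as an Euler product over odd primes; analytic continuation then yields the claim on $\Re s = a$.

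To unfold the LHS I would use the explicit Euler product for $V_c(s,w;\ell)$ implicit in the proof of Lemma \ref{lemma:singularity location} (cf.\ \eqref{eq:Uc(s,w) euler}), whose local factor at an odd prime $p$ depends on whether $p\mid c$, whether $p\mid\ell$, or neither. Performing the outer sum $\sum_{(c,2\ell)=1}\mu(c)/c^{2-2s-2\alpha}$ distributes across primes $p\nmid 2\ell$, replacing each such local factor by its $p\nmid c$ value minus its $p\mid c$ value divided by $p^{2-2s-2\alpha}$. After folding in the prefactor $Z_{A'}^{[2]}(1-\alpha)\,\zeta^{[2]}(1-2s-2\alpha)$ and absorbing $\ell^{s+\alpha-1}/2$ into the primes dividing $\ell$, the LHS becomes a single Euler product over odd primes. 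Each local factor can then be written down explicitly using the values of $G_{p^{2k}}(p^\mu)$ from Lemma \ref{lemma:poisson}, which are nonzero only for $\mu\in\{\kappa,\kappa+1\}$ with $\kappa=\nu_p(k)$.

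On the RHS the defining sum for $\tilde{\mathcal{B}}^{(2)}((A')_s\cup\{-\alpha-s\};\ell)$ factors immediately over odd primes, with local factor at $p$ equal to $\sum_{m\ge 0,\;m+\nu_p(\ell)\text{ even}}\tau_B(p^m)a_{p^{m+\nu_p(\ell)}}/p^{m/2}$ where $B=(A')_s\cup\{-\alpha-s\}$. Writing $1/\zeta^{[2]}(2)=\prod_{p>2}(1-p^{-2})$ and distributing this factor into each local piece, the identity \eqref{eq:matching_eulers} reduces to an elementary identity at each odd prime $p$ between two explicit rational functions of $p^{-s}$, $p^{\alpha}$, and the $p^{\alpha_i}$ for $\alpha_i\in A'$, parametrized by $\kappa=\nu_p(\ell)\ge 0$. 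At primes $p\nmid 2\ell$ this should reduce (after a short computation and the emergence of the constant $\lambda=1/2$ from the residue of $\zeta^{[2]}(1+w+\alpha)$ already noted in Lemma \ref{lemma:singularity location}) to the standard identity matching $\tau_A(p^{2m})$ with the symmetric functions of the shifts.

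The principal obstacle will be matching local factors at primes $p\mid\ell$, where the LHS uses $\mathcal{H}_p(s,-\alpha;\ell)/\mathcal{H}_p(s,-\alpha;1)$ built from the five-case definition of $G_{k^2}(p^\mu)$, while the RHS carries the restricted-parity sum together with the uniform factor $a_{p^{m+\kappa}}=p/(p+1)$. The algebra is not deep but is bookkeeping-intensive: one must track how the extra shift $-\alpha-s\in B$ on the RHS conspires with the Mobius sum and the functional-equation factor $\zeta^{[2]}(1-2s-2\alpha)$ on the LHS to produce matching local factors, and verify that the normalization $1/\zeta^{[2]}(2)$ arises exactly from the $p\nmid\ell$ factors of $\prod_p(1-p^{-2})$ needed to reconcile the $p/(p+1)$ weights with the Euler factors that appear on the left.
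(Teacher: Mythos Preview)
Your overall strategy---prime-by-prime comparison of Euler products after unfolding $V_c$ via \eqref{eq:Uc(s,w) euler} and distributing the M\"obius sum---is exactly what the paper does. But your proposal is too optimistic at the crucial step, and you have the location of the difficulty backwards.

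At primes $p\nmid 2\ell$ you say the local identity ``should reduce \dots\ to the standard identity matching $\tau_A(p^{2m})$ with the symmetric functions of the shifts.'' There is no such standard identity here. What is actually needed is the recursion-based identity
\[
T(2n)-T(2n-2)+\Big(\tfrac{1}{X}-X\Big)T(2n-1)=U(2n)-\tfrac{U(2n-2)}{X^2},
\]
where $X=p^{-\alpha}$, $T(n)=\tau_A(p^n)$, and $U(n)=\tau_{A'\cup\{-\alpha\}}(p^n)$; this is proved using $\tau_A(p^n)=\tau_{A'}(p^n)+p^{-\alpha}\tau_A(p^{n-1})$ to express both $T$ and $U$ in terms of $t_n=\tau_{A'}(p^n)$. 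This is the only genuinely nontrivial algebraic step, and your proposal does not identify it. Moreover, the primes $p\mid\ell$ are not harder: after separating into $\nu_p(\ell)$ even and odd, both cases collapse to the \emph{same} identity above (there is no M\"obius term for $p\mid\ell$, which actually simplifies the bookkeeping). So the ``principal obstacle'' you flag is illusory, while the real one---the $T$-$U$ recursion identity---is missing from your plan.
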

	
	We have left $\lambda$ in this expression rather than writing $1/2$ in order to easily refer to this factor later. (Recall also the notation $A' = A \setminus \{\alpha\}$.)
	
	\begin{proof}
		Retracing our steps, note that the left hand side of \eqref{eq:matching_eulers} is equal to
		\begin{multline}
			\label{eq:back_to_U(s,w)}
			\ell^{s+\alpha-1}\frac{ \zeta^{[2]}(1-2s-2\alpha)}{\zeta^{[2]}(2s+2\alpha)} \Big[ \operatornamewithlimits{Res}_{w = -\alpha} U(s,w) \Big(-\frac{1-2^{2w-2s}}{1-2^{1+2w-2s}}\Big)\Big] \\
			= \ell^{s+\alpha-1}\frac{ \zeta^{[2]}(1-2s-2\alpha)}{\zeta^{[2]}(2s+2\alpha)} \Big[ \lim_{w = -\alpha} \frac{\lambda}{\zeta^{[2]}(1+w+\alpha)} U(s,w) \Big(-\frac{1-2^{2w-2s}}{1-2^{1+2w-2s}}\Big)\Big]
		\end{multline}
		Again $\lambda = 1/2$; this factor appears here because $1/\zeta(s) \sim \lambda/\zeta^{[2]}(s)$ as $s\rightarrow 1$.
		
		We show that the left hand side and right hand side of \eqref{eq:matching_eulers} are equal by an identification of Euler products. We will show for $p > 2$ and $\nu \geq 0$ even,
		\begin{multline} \label{eqn:identity3}
			\frac{1}{p^{\nu-\nu(s+ \alpha)}} \frac{1-p^{-2(s+\alpha)}}{1-p^{2(\alpha+s)-1}} \left(1-\frac 1 p\right)
			\sum_{c,k,n=0\atop \min\{c,n+\nu\}=0}^\infty \frac{\mu(p^c)\tau_{A_s}(p^n)G_{p^{2k}}(p^{n+\nu})} {p^{2k (s+\alpha)+c(2-2\alpha-2s)+\frac 32 n-n(s+\alpha)}}\\
			=\left(1-\frac{1}{p^2}\right) 
			\sum_{n=0}^\infty \frac{\tau_{(A')_s\cup\{-\alpha-s\}}(p^{2n})a_{p^{n+\nu}}  } {p^{n}}
		\end{multline}
		while if $\nu>0$ is odd,
		\begin{multline} \label{eqn:identity4} \frac{1}{p^{\nu-\nu (s+\alpha)}} \frac{1-p^{-2(s+\alpha)}}{1-p^{2(\alpha+s)-1}}\left(1-\frac 1 p\right)
			\sum_{k,n=0}^\infty \frac{\tau_{A_s}(p^n)G_{p^{2k}}(p^{n+\nu})}{p^{2k (s+\alpha)+\frac 3 2 n-n (s+\alpha) }}\\
			=\frac 1 {\sqrt{p}}\left(1-\frac{1}{p^2}\right) 
			\sum_{n=0}^\infty \frac{\tau_{(A')_s\cup\{-\alpha-s\}}(p^{2n+1})a_{p^{n+\nu}} }{p^{n}}.
		\end{multline}
		(In fact \eqref{eqn:identity3} is true for even \emph{and} odd $\nu$ and contains \eqref{eqn:identity4} as a special case, but it will be more transparent to separate the two cases.)
		
		By using \eqref{eqn:euler2} to analyze \eqref{eq:back_to_U(s,w)}, the left hand sides of \eqref{eqn:identity3} and \eqref{eqn:identity4} are \emph{formally} the local factor of an Euler product for the left hand side of \eqref{eq:matching_eulers}, and likewise the right hand sides are formally the Euler product for the right hand side of \eqref{eq:matching_eulers}, where $\nu = \nu_p(\ell)$. (Note that the product over odd primes will in each case formally be $2$ times each side of \eqref{eq:matching_eulers}, owing the factors $\lambda = 1/2$ we have labelled.) The products of these local factors over all odd primes will not converge, but from the preceding discussion one sees that if these local factors are multiplied (on both the left and right hand side) by the local factors of $(\zeta^{[2]}(1-2s-2\alpha) Z_{A'}^{[2]}(1-\alpha))^{-1}$, the resulting product over all odd primes will converge, for $\Re s = a$. We therefore will have proved the Lemma if we verify \eqref{eqn:identity3} and \eqref{eqn:identity4}. 
		
		It is sufficient to prove the identities \eqref{eqn:identity3} and \eqref{eqn:identity4} in the case that $s=0$, as the general case follows from this case 
		by replacing the arbitrary set $A$ by $A_s$; therefore we assume that $s=0$ below. 
		
		To prove the identities we let $X=p^{-\alpha}$ and $Y=p^{\alpha-\frac 32}  = \frac{1}{p^{\frac 32}X}$.
		and   observe that 
		\begin{eqnarray*}
			G_{p^{2k}}(p^{m}) =\left\{ \begin{array}{ll}
				\phi(p^m) & \mbox{if $m\le 2k $ and $m$  is even} \\
				p^{2k+1/2} &  \mbox{if $m=2k+1$}\\
				0 & \mbox{otherwise} \end{array} \right.
		\end{eqnarray*}
		so that 
		\begin{eqnarray*}
			\sum_{k=0}^\infty  G_{p^{2k}}(p^{m})X^{2k}=\left\{ \begin{array}{ll}
				\frac{\phi(p^m)}{(1-X^2)} X^m & \mbox{if $m$ is even} \\
				\sqrt{p}   p^{m-1}X^{m-1} &  \mbox{if $m$ is odd} \end{array} \right. 
		\end{eqnarray*}
		From this, we see that $\nu $ even gives 
		\begin{eqnarray*} \frac{1}{p^{\nu+\nu \alpha}}
			\sum_{k,n=0}^\infty 
			\tau_A(p^n)G_{p^{2k}}(p^{n+\nu}) X^{2k}Y^n=
			\sum_{n=0}^\infty 
			\frac{\tau_A(p^{2n})\phi(p^{2n+\nu})}{(1-X^2)p^\nu} X^{2n}Y^{2n}+\sqrt{p}\sum_{n=0}^\infty     \tau_A(p^{2n+1}) p^{2n}X^{2n}Y^{2n+1} .
		\end{eqnarray*}
		
		If $\nu$ is odd we have 
		\begin{eqnarray*} \frac{1}{p^{\nu+\nu \alpha}}
			\sum_{k,n=0}^\infty 
			\tau_A(p^n)G_{p^{2k}}(p^{n+\nu}) X^{2k}Y^n=
			\sum_{n=0}^\infty     \tau_A(p^{2n}) p^{2n-\frac 12}X^{2n-1}Y^{2n}+\sum_{n=0}^\infty 
			\frac{\tau_A(p^{2n+1})\phi(p^{2n+1+\nu})}{(1-X^2)p^\nu} X^{2n+1}Y^{2n+1} .
		\end{eqnarray*}
		
		The first equation, with $\nu$ even, may be rewritten as
		\begin{eqnarray*} \frac{1}{p^{\nu+\nu \alpha}}
			\sum_{k,n=0}^\infty 
			\tau_A(p^n)G_{p^{2k}}(p^{n+\nu}) X^{2k}Y^n=
			\sum_{n=0}^\infty 
			\frac{\tau_A(p^{2n})\phi(p^{2n+\nu})/p^{2n+\nu}}{(1-X^2)p^n} +\frac 1 {pX}\sum_{n=0}^\infty   \frac{  \tau_A(p^{2n+1}) }{p^n}.
		\end{eqnarray*}
		If $\nu=0$ this is 
		\begin{eqnarray*}
			\sum_{k,n=0}^\infty 
			\tau_A(p^n)G_{2k}(p^n) X^{2k}Y^n&=&
			\frac{1}{(1-X^2)}+\frac{\tau_A(p)}{pX}+\sum_{n=1}^\infty\left(\frac{(1-1/p)}{(1-X^2)}\tau_A(p^{2n}) +\frac{\tau_A(p^{2n+1})}{pX}\right)p^{-n}
		\end{eqnarray*}
		If $\nu>0$ this is 
		\begin{eqnarray*} \frac{1}{p^{\nu+\nu \alpha}}
			\sum_{k,n=0}^\infty 
			\tau_A(p^n)G_{p^{2k}}(p^{n+\nu}) X^{2k}Y^n= \frac{(1-1/p)}{(1-X^2)}
			\sum_{n=0}^\infty 
			\frac{\tau_A(p^{2n})} {p^n} +\frac 1 {pX}\sum_{n=0}^\infty   \frac{  \tau_A(p^{2n+1}) }{p^n}.
		\end{eqnarray*}
		The second equation, with $\nu$ odd becomes 
		\begin{eqnarray*} \frac{1}{p^{\nu+\nu \alpha}}
			\sum_{k,n=0}^\infty 
			\tau_A(p^n)G_{p^{2k}}(p^{n+\nu}) X^{2k}Y^n= \frac{1}{\sqrt{p}X}
			\sum_{n=0}^\infty    \frac{ \tau_A(p^{2n}) }{p^n} +\frac1{\sqrt{p}}\frac{(1-\frac 1 p )}{(1-X^2)}\sum_{n=0}^\infty 
			\frac{\tau_A(p^{2n+1}) }{p^n} .
		\end{eqnarray*}
		Let's focus on the $\nu=0$ case. Let $z=1/p$. We need to add in the contribution when $c=1$ which is 
		$$\frac{-1}{p^{2-2\alpha}}\sum_{k=0}^\infty \frac{1}{p^{2k\alpha}}=-\frac{z^2}{X^2(1-X^2)}
		$$
		since $(1-p^{-2\alpha})=1-X^2$.
		Let 
		$$T(n):= \tau_A(p^n) \qquad \mbox{and} \qquad U(n):= \tau_{A'\cup\{-\alpha\}}(p^n).$$
		Then, our identity in the case $\nu=0$ may be rewritten as 
		\begin{eqnarray*}&&
			-\frac{z^2}{X^2} +1+(1-z) \sum_{k=1}^\infty T(2k) z^k+\left(\frac 1 X -X\right)z\sum_{k=0}^\infty T(2k+1)z^k\\
			&&\qquad \stackrel{?}{=} (1+z)\left(1-\frac z {X^2}\right)\left(1+\frac{1}{1+z}\sum_{k=1}^\infty U(2k) z^k\right)
		\end{eqnarray*}
		Let us consider the coefficient of $z^n$ on both sides. For $n=0, 1$ or 2 we can check that the coefficients of $z^n$ on both sides are equal. For $n\ge 3$ we would like to show that
		\begin{eqnarray}   \label{eqn:identity0}
			&&  T(2n)  -T(2n-2) +\left(\frac 1X-X\right) T(2n-1)   \stackrel{?}{=}     
			U(2n) - \frac {U(2n-2)}{X^2}  
		\end{eqnarray}
		For a set $A$ containing an element $\alpha$ recall the identity
		$$\tau_A(p^n)=\tau_{A'}(p^n)+p^{-\alpha}\tau_A(p^{n-1})$$
		where $A'=A-\{\alpha\}$.   
		Let $$t_n=\tau_{A'}(p^n).$$
		Then we have 
		$$T(2n)=t_{2n}+X t_{2n-1}+X^2 T(2n-2);$$
		$$T_{2n-1}=t_{2n-1}+XT_{2n-2};$$
		$$U(2n)=t_{2n}+\frac 1 X t_{2n-1} +\frac{1}{X^2} U(2n-2).$$
		Substituting these into (\ref{eqn:identity0}) we get our desired  identity when $\nu=0$.
		
		When $\nu>0$ is even we have to prove 
		\begin{eqnarray*} (1-z) 
			\sum_{n=0}^\infty 
			T(2n)z^n  + z \left(\frac 1 X -X\right)\sum_{n=0}^\infty  T(2n+1)z^n 
			=\left(1-\frac{z}{X^2}\right)\sum_{n=0}^\infty U(2n)z^n
		\end{eqnarray*}
		This boils down to the same identity (\ref{eqn:identity0}) we just proved.
		
		Finally, when $\nu$ is odd we have to prove
		\begin{eqnarray*} \left(\frac 1 X-X\right)  
			\sum_{n=0}^\infty   T(2n)z^n  +(1-z) \sum_{n=0}^\infty 
			T(2n+1) z^n = \left(1-\frac{z}{X^2}\right)\sum_{n=0}^\infty U(2n+1) z^n
		\end{eqnarray*}
		i.e. that
		$$\left(\frac 1X-X\right) T(2n)+T(2n+1)-T(2n-1) =U(2n+1)-\frac{U(2n-1)}{X^2}.$$
		Again this is a simple change of variables from (\ref{eqn:identity0}).
	\end{proof}
	
	Thus applying Lemmas \ref{lem:func_eq} and \ref{lem:matching_eulers} to \eqref{eq:S_on_line_a} we have shown for sufficiently small positive $a'$ (depending on $\epsilon$),
	
	\begin{multline}
		\label{eq:S_on_line_a with Btilde}
		\mathcal S_A^{\ne 0}(D;\ell) = \sum_{\alpha\in A}  \frac 1{2\pi i} \int_{(a')}    \breve{W}(s)\breve{\Psi}(1-s-\alpha) N^s  8^{-s-\alpha} \chi(1/2+s+\alpha) \frac{\tilde{\mathcal{B}}^{(2)}((A')_s \cup \{-\alpha-s\};\ell)}{2\zeta^{[2]}(2)} D^{1-s-\alpha} ~ds\\ +O(N^{1/2}D^{\epsilon}Y^{1+\epsilon}\ell^{\frac 12+\epsilon} + D^{1+\epsilon} \ell^\epsilon /Y^{1-\epsilon}),
	\end{multline}

	\subsection{Extracting an average in $d$}
	
	Finally we will show that the right hand side of \eqref{eq:S_on_line_a with Btilde} can be replaced by the expression we began with, appearing on the right hand side of Theorem \ref{thm:main}. Theorem \ref{thm:main} involves an average over $d$ of the sums $\mathcal{B}^{(2d)}$, whereas what we have proved in \eqref{eq:S_on_line_a with Btilde} involves the function $\tilde{\mathcal{B}}^{(2)}$, in which the sums defining $\mathcal{B}^{(2)}$ have been multiplied by the factors $a_{n\ell}$. The passage to Theorem \ref{thm:main} from \eqref{eq:S_on_line_a with Btilde} is motivated by an identity,
	\begin{equation}
		\label{eq:an_explanation}
		\sum_{(d,m)=1} \mu^2(2d) \Psi(d/D) d^{-\upsilon} \sim \frac{1}{2\zeta^{[2]}(2)} a_m \breve{\Psi}(1-\upsilon) D^{1-\upsilon},
	\end{equation}
	as $D\rightarrow\infty$, which holds for $m$ odd and $\upsilon$ with small real part. (We will use \eqref{eq:an_explanation} only to motivate the discussion that follows and so we will neither prove it nor specify how small $\upsilon$ must be.) Heuristically, one may pass between Theorem \ref{thm:main} and \eqref{eq:S_on_line_a with Btilde} by applying \eqref{eq:an_explanation} with $m = n\ell$ and $\upsilon = s+\alpha$. Unfortunately the series defining $\mathcal{B}^{(2d)}((A')_s \cup \{\alpha - s\};\ell)$ and $\tilde{\mathcal{B}}^{(2d)}((A')_s \cup \{\alpha - s\};\ell)$ will not converge absolutely for variables $s$ and sets $A$ in the relevant range for Theorem \ref{thm:main} and \eqref{eq:S_on_line_a with Btilde}. But we have the following result, which may be thought of as expressing the same idea: for fixed small $a' >0$ and $\Re s = a' > 0$,
	
	\begin{multline}
		\label{eq:B_to_Btilde}
		\sum_{(d,\ell)=1} \mu^2(2d) \Psi(d/D) \sum_{\alpha \in A} (8d)^{-s-\alpha} \chi(1/2+s+\alpha) \mathcal{B}^{(2d)}((A')_s \cup \{-\alpha-s\};\ell) \\
		= \sum_{\alpha \in A} \breve{\Psi}(1-s-\alpha) 8^{-s-\alpha} \chi(1/2+s+\alpha) \frac{\tilde{\mathcal{B}}^{(2)}((A')_s \cup \{-\alpha-s\};\ell)}{2\zeta^{[2]}(2)}  D^{1-s-\alpha} \\
		+ O_\epsilon(D^{1/2-a'+\epsilon}),
	\end{multline}
	for any $\epsilon > 0$, uniformly for all odd $\ell$.
	
	The reader may have noted that the left hand side and right hand side of \eqref{eq:B_to_Btilde} are linear combinations of simpler expressions, and it is tempting to prove this result by removing the sum over $\alpha \in A$ and treating each piece individually. This would work, but for the possibility that some $\alpha \in A$ are very close in which case the individual terms $\mathcal{B}((A')_s \cup \{-\alpha-s\})$ will have singularities. It is only by summing them together that these singularities cancel out and a result of this sort is proved for all possible sets $A$ with the error term that is claimed.
	
	Let use proceed to the proof of \eqref{eq:B_to_Btilde} which will occupy the rest of this subsection. Given a finite (multi)set $B = \{\beta_1,...,\beta_k\}$, we define the function $\tilde{\mathcal{B}}^{(2)}_{(w)}(B;\ell) = \tilde{\mathcal{B}}^{(2)}_{(w)}(\beta_1,...,\beta_k;\ell)$ for $\Re w > 0$ and $\Re \beta_i > 0$ for all $i$ by
	$$
	\tilde{\mathcal{B}}^{(2)}_{(w)}(B;\ell):= \sum_{(n,2)=1 \atop n\ell = \square} \frac{\tau_B(n)}{\sqrt{n}} \prod_{p | n\ell} \frac{p^w}{p^w+1}.
	$$
	Note that if $\Re w > 1$ and $\Re \beta_i > 0$ for all $i$,
	\begin{multline}
		\label{eq:Btilde_generating}
		\sum_{(d,\ell)=1} \frac{\mu^2(2d)}{d^w} \mathcal{B}^{(2d)}(B;\ell) = \sum_{(d,\ell)=1} \frac{\mu^2(2d)}{d^w} \sum_{(n,2d)=1 \atop n\ell = \square} \frac{\tau_B(n)}{\sqrt{n}} \\
		= \sum_{(n,2)=1 \atop n\ell = \square} \frac{\tau_B(n)}{\sqrt{n}} \sum_{(d,n\ell)=1} \frac{\mu^2(2d)}{d^w} = \tilde{\mathcal{B}}^{(2)}_{(w)}(B;\ell) \frac{\zeta^{[2]}(w)}{\zeta^{[2]}(2w)}.
	\end{multline}
	We will use \eqref{eq:Btilde_generating} to prove \eqref{eq:B_to_Btilde}, but first we must discuss an analytic continuation of $\tilde{B}^{(2)}_{(w)}(B;\ell)$ in the parameters $B$.
	
	Note that for $\Re w > 0$ and $\Re \beta_i > 0$ for all $i$ (and $\ell$ odd),
	\begin{multline*}
		\tilde{\mathcal{B}}_{(w)}^{(2)}(B;\ell) = \prod_{p > 2 \atop p \nmid \ell} \Big( 1+ \sum_{n=1}^\infty \frac{\tau_B(p^{2n})}{p^n} \Big(\frac{p^w}{p^w+1}\Big) \Big) \\
		\times 
		\prod_{p | \ell \atop \nu_p(\ell)\; \mathrm{even}} \Big( \sum_{n=0}^\infty \frac{\tau_B(p^{2n})}{p^n} \Big(\frac{p^w}{p^w+1}\Big) \Big) \prod_{p|\ell \atop \nu_p(\ell)\; \mathrm{odd}} \Big( \sum_{n=0}^\infty \frac{\tau_B(p^{2n+1})}{p^{n+1/2}} \Big(\frac{p^w}{p^w+1}\Big)\Big).
	\end{multline*}
	Likewise note that $\mathcal{B}^{(2)}(B;\ell)$ has the same Euler product in which the factors $\frac{p^w}{p^w+1}$ have been replaced by $1$. Set
	\begin{equation}
		\label{eq:F_def}
		\mathcal{F}_{(w)}(B;\ell) := \frac{\tilde{\mathcal{B}}_{(w)}^{(2)}(B;\ell)}{\mathcal{B}_{(w)}^{(2)}(B;\ell)} = \prod_{p > 2 \atop p \nmid \ell} \frac{\Big( 1+ \sum_{n=1}^\infty \frac{\tau_B(p^{2n})}{p^n} \Big(\frac{p^w}{p^w+1}\Big) \Big)}{\Big( 1+ \sum_{n=1}^\infty \frac{\tau_B(p^{2n})}{p^n} \Big)} \prod_{p | \ell} \frac{p^w}{p^w+1}.
	\end{equation}
	The Euler product defining $\mathcal{F}_{(w)}(B;\ell)$ will converge as long as $\Re \beta_i \geq -1/4+\delta$ for all $i$ and $\Re w \geq 1/2$ for $\delta > 0$ and so we may extend the definition of $\mathcal{F}_{(w)}(B;\ell)$ into this domain. One may check through the Euler product that on this domain for fixed $\delta > 0$,
	\begin{equation}
		\label{eq:F_bound}
		\mathcal{F}_{(w)}(B;\ell) = \prod_{p > 2 \atop p \nmid \ell} \Big( 1 + O(\Big( \frac{1}{p^{1+2\delta}}\Big) \Big) \prod_{p | \ell} \frac{p^w}{p^w+1} \ll 1,
	\end{equation}
	where the bound is uniform in $\ell$.
	
	We return to \eqref{eq:Btilde_generating}; by analytic continuation it will also hold in the domain $\Re w \geq 1/2$ and $\beta_i \geq -1/4+\delta$ for all $\beta_i \in B$.
	
	Hence if $c> 1$, and $\Re \beta_i \geq -1/4+\delta$ for all $i$, and $\upsilon$ is any complex number,
	\begin{multline}
		\label{eq:d_average_to_contour}
		\sum_{(d,\ell)=1} \mu^2(2d) \Psi(d/D) d^{-\upsilon}  \mathcal{B}^{(2d)}(B;\ell) = \frac{1}{2\pi i} \int_{(c)} \breve{\Psi}(z-\upsilon) D^{z-\upsilon} \sum_{(d,\ell)=1} \frac{\mu^2(2d)}{d^z} \mathcal{B}^{(2d)}(B;\ell)\, dz \\
		= \frac{1}{2\pi i} \int_{(c)} \breve{\Psi}(z-\upsilon) D^{z-\upsilon}  \tilde{\mathcal{B}}^{(2)}_{(z)}(B;\ell) \frac{\zeta^{[2]}(z)}{\zeta^{[2]}(2z)}\, dz.
	\end{multline}
	
	Thus putting $\upsilon = s+\alpha$ and $B = (A')_s \cup \{-\alpha-s\}$, then summing over $\alpha \in A$, we see that the left hand side of \eqref{eq:B_to_Btilde} is
	$$
	\frac{1}{2\pi i} \int_{(c)} \Big( \sum_{\alpha \in A} \breve{\Psi}(z-s-\alpha)8^{-s-\alpha}\chi(1/2+s+\alpha) \tilde{\mathcal{B}}^{(2)}_{(z)}((A')_s \cup \{-\alpha-s\};\ell)\Big) \frac{\zeta^{[2]}(z)}{\zeta^{[2]}(2z)}\, dz.
	$$
	
	Push the contour $\Re z = c$ to $\Re z = 1/2$. The residue at $z=1$ straightforwardly gives the main term on the right hand side of \eqref{eq:B_to_Btilde}, while the contour at $\Re z = 1/2$ will give the error term.
	
	To verify the claimed error term, condense the sum in parentheses using Corollary \ref{cor:doublepermutationsum} of the Appendix, exactly as in the proof of Proposition \ref{prop:swap_dropping} earlier. From \eqref{eq:F_def} and \eqref{eq:F_bound}, we have $\tilde{\mathcal{B}}_{(w+\upsilon)}^{(2)}(B;\ell) \ll |\mathcal{B}^{(2)}(B;\ell)|$ as long as $\Re z \geq 1/2$ for index sets $B$ with all elements to the right of the line right real part $-1/4$. The decay of $\breve{\Psi}(z-s-\alpha)$ means the integral will be effectively localized to $z$ close to $s+\alpha$. Use Lemma \ref{lem:B_cont} and the Lindel\"of Hypothesis and Stirling's formula \eqref{eq:stirling} to see that the error term claimed in \eqref{eq:B_to_Btilde} is obtained.

	\subsection{Completing the proof of Theorem \ref{thm:main}}
	
	Applying \eqref{eq:B_to_Btilde} to \eqref{eq:S_on_line_a with Btilde} we have, for $\epsilon >0$ and sufficiently small $a' > 0$ (depending on $\epsilon$),
	\begin{multline*}
		\mathcal{S}_A^{\neq 0}(D;\ell) = \sum_{(d,\ell)=1 } \mu^2(2d) \Psi\left(\frac d{D}\right) 
		\frac{1}{2\pi i}\int_{(a)}\breve W(s) N^s 
		\sum_{\alpha \in A}  X_{8d}(1/2 +s+\alpha) \mathcal B^{(2d)}((A')_s\cup \{-\alpha-s\};\ell) ~ds \\
		+O(D^{1/2+\epsilon} + N^{1/2}D^{\epsilon}Y^{1+\epsilon}\ell^{\frac 12+\epsilon} + D^{1+\epsilon} \ell^\epsilon /Y^{1-\epsilon})
	\end{multline*}
	From \eqref{eq:1swap} we have an analogous expression for $\mathcal{S}_A^0(D;\ell)$. Optimize the parameter $Y$ by setting $Y = D^{1/2}/\ell^{1/4}$ in both cases. The decomposition $\mathcal{S}_A = \mathcal{S}_A^0 + \mathcal{S}_A^{\neq 0}$ then yields the Theorem in the setting that the real part $a'$ of the line of integration is chosen to be sufficiently small based on $\epsilon$. 
	
	To prove the Theorem as stated we must show that this implies the same result when the line of integration is taken to have real part $a=1/10$. But one may shift the contour in this result between a line with sufficiently small real part $a'$ (based on $\epsilon$) and a line with real part $a = 1/10$ since the  functions $\mathcal{B}^{(2d)}((A')_s\cup \{-\alpha-s\};\ell)$ will cross no poles in such a contour shift. 
	This proves the Theorem for small fixed $a$, e.g. $a = 1/10$.
	
	\appendix
	
	\section{Symmetric sums and contour integrals}
	
	In this section we prove a few results which condense permutation sums with singularities into contour integrals. The method dates at least back to \cite[Sec. 2.5]{CFKRS}. We use the following notation:
	$$
	\Delta(\alpha_1,...,\alpha_k) := \prod_{i < j}(\alpha_j-\alpha_i),
	$$
	with the convention for a singleton $\Delta(\alpha_1):=1$. Similarly we set
	$$
	\Delta(z_1,...,z_j;\alpha_1,...,\alpha_k):= \prod_{r=1}^j \prod_{s=1}^k (z_r-\alpha_s).
	$$
	Note that $\Delta(\alpha_1,...,\alpha_k)^2$ is symmetric, and $\Delta(z_1,...,z_j;\alpha_1,...,\alpha_k)$ is symmetric in $z_1,...,z_j$ and $\alpha_1,...,\alpha_k$.
	
	\begin{prop}
		\label{prop:singlepermutationsum}
		Let $A = \{\alpha_1,...,\alpha_k\}$ be a collection of complex numbers, and for $j \leq k$ consider $G: \mathbb{C}^j \rightarrow \mathbb{C}$. Suppose $E$ is an open connected set in $\mathbb{C}$ containing $A$, and $G(z) = G(z_1,...,z_j)$ is analytic in each $z_i$ on the region $E$ (taking all other variables $z_s$ as fixed). Then
		$$
		\frac{1}{(2\pi i)^j} \int_\Gamma \cdots \int_\Gamma \frac{G(z) \Delta(z)^2}{\Delta(z;A)}\, d^j z = \sum_{u_1,...,u_j \in A \atop \textrm{distinct}} \frac{G(u_1,...,u_j)}{\Delta(u; A-u)},
		$$
		where $\Gamma$ is a closed curve or union of closed curves in $E$ which encircles each element of $A$ exactly once.
		
		The right hand side is well-defined if all elements of $A$ are distinct, and if it is interpreted by continuity in $\alpha_1,...,\alpha_k$ if some elements of $A$ coincide, this relation remains true with the right hand side bounded.
	\end{prop}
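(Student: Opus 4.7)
The plan is to apply Cauchy's residue theorem iteratively in each of the variables $z_1,\ldots,z_j$. In each variable separately, the integrand $G(z)\Delta(z)^2/\Delta(z;A)$ is meromorphic on the region enclosed by $\Gamma$ and has only simple poles at $z_r=\alpha_1,\ldots,\alpha_k$ coming from the factors $(z_r-\alpha_s)^{-1}$ in $1/\Delta(z;A)$; the numerator $G(z)\Delta(z)^2$ is holomorphic on $E$ in each argument. Applying the residue theorem once in each variable in succession writes the integral as a sum of iterated residues at $z_r=\alpha_{s_r}$, where $(s_1,\ldots,s_j)$ ranges over $\{1,\ldots,k\}^j$.

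The crux is that tuples with a repeated index contribute zero. If $s_r=s_{r'}$ with $r<r'$, then after evaluating the residue at $z_r=\alpha_{s_r}$, the factor $(z_{r'}-z_r)^2$ from $\Delta(z)^2$ becomes $(z_{r'}-\alpha_{s_{r'}})^2$, a double zero that cancels the simple pole $(z_{r'}-\alpha_{s_{r'}})^{-1}$ in $\Delta(z;A)$, and the subsequent residue at $z_{r'}=\alpha_{s_{r'}}$ vanishes. For a tuple $u=(\alpha_{s_1},\ldots,\alpha_{s_j})$ with distinct entries, the iterated residue evaluates directly to
$$
\frac{G(u)\,\Delta(u)^2}{\prod_{r=1}^j\prod_{s\neq s_r}(u_r-\alpha_s)}.
$$
Using the factorization $\prod_{s\neq s_r}(u_r-\alpha_s)=\Delta(u_r;A-u)\cdot\prod_{r'\neq r}(u_r-u_{r'})$ and then multiplying over $r$, the pairwise differences $\prod_{r\neq r'}(u_r-u_{r'})$ combine (up to the sign $(-1)^{\binom{j}{2}}$) to give $\Delta(u)^2$, which cancels the Vandermonde in the numerator and leaves $G(u)/\Delta(u;A-u)$. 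Summing over distinct tuples $(s_1,\ldots,s_j)$ yields the claimed identity.

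For the continuity clause, note that the contour integral on the left depends holomorphically on the parameters $\alpha_1,\ldots,\alpha_k$ as long as they remain strictly inside $\Gamma$, since the denominator $\Delta(z;A)$ stays uniformly away from zero on $\Gamma$; in particular the left-hand side is locally bounded on any compact parameter region. The right-hand side agrees with this analytic function on the open dense subset where the $\alpha_i$ are distinct, and therefore the apparent poles of individual summands (coming from zeros of $\Delta(u;A-u)$ when elements of $A$ collide) must cancel globally, yielding a bounded continuous extension that equals the left-hand side. The main technical obstacle is the algebraic bookkeeping in the second paragraph: one must carefully track the pairwise sign contributions from the Vandermonde so that the iterated residue collapses exactly to $G(u)/\Delta(u;A-u)$ without extraneous factors, matching the stated normalization.
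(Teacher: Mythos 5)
Your proposal follows the paper's proof essentially step for step: iterated residues in $z_1,\dots,z_j$, the observation that tuples with a repeated index die because of the factor $\Delta(z)^2$, the algebraic identification of the surviving residue terms, and the analyticity/continuity argument when elements of $A$ coincide. So in structure there is no divergence from the paper.

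The one point you flag as "the main technical obstacle" — the sign bookkeeping — is, however, a real issue, and it does not resolve in the way you assert. Carrying it out: for a tuple $u=(u_1,\dots,u_j)$ of distinct elements of $A$ one has $\prod_{r=1}^{j}\prod_{\alpha\in A\setminus\{u_r\}}(u_r-\alpha)=\Bigl(\prod_{r\neq r'}(u_r-u_{r'})\Bigr)\Delta(u;A-u)=(-1)^{\binom{j}{2}}\Delta(u)^2\,\Delta(u;A-u)$, so the iterated residue is $(-1)^{\binom{j}{2}}\,G(u)/\Delta(u;A-u)$, not $G(u)/\Delta(u;A-u)$: the factor $(-1)^{\binom{j}{2}}$ you mention parenthetically does not cancel against anything. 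A direct check with $j=k=2$, $G\equiv 1$ gives $\frac{1}{(2\pi i)^2}\int_\Gamma\int_\Gamma\frac{(z_2-z_1)^2\,dz_1\,dz_2}{(z_1-\alpha_1)(z_1-\alpha_2)(z_2-\alpha_1)(z_2-\alpha_2)}=-2$, whereas the stated right-hand side is $+2$. So your claim that careful tracking yields the identity "without extraneous factors" is exactly the step that fails; the identity holds only up to the factor $(-1)^{\binom{j}{2}}$ (and the paper's own proof makes the same sign-free simplification $\Delta(u)^2/\prod_r\prod_{\alpha\in A\setminus\{u_r\}}(u_r-\alpha)=1/\Delta(u;A-u)$, so the discrepancy is inherited from the statement itself). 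This is harmless downstream, since Proposition \ref{prop:swap_dropping} and the error estimate surrounding \eqref{eq:B_to_Btilde} use the corollary only for upper bounds on absolute values, but your write-up should either carry the sign $(-1)^{\binom{j}{2}}$ explicitly or note that the identity is needed, and proved, only up to sign. The vanishing of repeated-index terms and the continuity clause are argued correctly and as in the paper.
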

	
	On the right hand side, we have let $u$ be a shorthand for the (multi)set $\{u_1,...,u_j\}$, so that $\Delta(u;A-u) = \Delta(u_1,..,u_j; v_1,...,v_{k-j})$ where $v_1,..., v_{k-j}$ are the elements of $A$ not in $u$. Note that in the sum on the right hand side $u_1,...,u_j$ can appear in any order, so that if $G$ is a symmetric function in $j$ variables, the left hand side will be
	$$
	j! \sum_{U \subset A\atop |U| = j} \frac{G(U)}{\Delta(U; A-U)}.
	$$
	
	\begin{proof}
		We first suppose all elements of $A$ are distinct. Then the residue theorem gives that the left hand side is
		$$
		= \sum_{u_1,...,u_j \in A} \frac{G(u_1,...,u_j) \Delta(u_1,...,u_j)^2}{\prod_{r=1}^j \prod_{\alpha \in A - \{u_r\}} (u_r-\alpha)}.
		$$
		But if any of $u_1,...,u_j$ coincide, $\Delta(u_1,...,u_j)^2 = 0$. Otherwise
		$$
		\frac{\Delta(u_1,...,u_j)^2}{\prod_{r=1}^j \prod_{\alpha \in A - \{u_r\}} (u_r-\alpha)} = \frac{1}{\Delta(u;A-u)}
		$$
		and the claim is proved.
		
		Now note that the left hand side in the theorem is analytic in the interior of $\Gamma$ for each variable $\alpha_i$. Thus any singularity appearing in the sum of the right hand side must be removable, and the right hand side can be interpreted by continuity and will remain bounded.
	\end{proof}
	
	\begin{corollary}
		\label{cor:doublepermutationsum}
		For $A = \{\alpha_1,...,\alpha_k\}$ a collection of complex numbers, $E$ an open connected set in $\mathbb{C}$ containing $A$, and $\Gamma$ a union of closed curves in $E$ which encircles each $\alpha_i$ exactly once, suppose $F(z;w) = F(z_1,..,z_j; w_1,...,w_{k-j})$ is analytic in each variable in the region $E$, as in Proposition \ref{prop:singlepermutationsum}. Then
		\begin{multline}
			\label{eq:double_contours}
			\frac{1}{(2\pi i)^k} \int_\Gamma \cdots \int_\Gamma  \int_\Gamma \cdots \int_\Gamma \frac{F(z;w) \Delta(z;w) \Delta(z)^2 \Delta(w)^2}{\Delta(z;A) \Delta(w;A)} \, d^{k-j}w \, d^j z \\
			= \sum_{u_1,...,u_j, v_1, ..., v_{k-j} \in A \atop \textrm{distinct}} \frac{F(u;v)}{\Delta(u;v)}.
		\end{multline}
		The right hand side is well-defined if all elements of $A$ are distinct, and if it is interpreted by continuity in $\alpha_1,...,\alpha_k$ if some elements of $A$ coincide, it will remain bounded.
	\end{corollary}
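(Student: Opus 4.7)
My plan is to iterate Proposition \ref{prop:singlepermutationsum} twice, once for each block of integration variables. First, regarding $z_1,\dots,z_j$ as fixed parameters, I would set $G(w_1,\dots,w_{k-j}) := F(z;w)\,\Delta(z;w)$; because $\Delta(z;w)$ is polynomial in each $w_s$ and $F$ is analytic in each $w_s$ on $E$, Proposition \ref{prop:singlepermutationsum} applies to the inner $(k-j)$-fold contour integral and collapses it to
\begin{equation*}
\sum_{\substack{v_1,\dots,v_{k-j} \in A \\ \textrm{distinct}}} \frac{F(z;v)\,\Delta(z;v)}{\Delta(v;A-v)}.
\end{equation*}
Substituting this back into the remaining $j$-fold $z$-integral, exchanging the finite sum with the integration, and then invoking Proposition \ref{prop:singlepermutationsum} a second time with $G(z_1,\dots,z_j):=F(z;v)\,\Delta(z;v)$ (again a polynomial times an analytic function of each $z_r$), I obtain
\begin{equation*}
\sum_{\substack{u_1,\dots,u_j,\,v_1,\dots,v_{k-j} \in A \\ u \textrm{ distinct},\; v \textrm{ distinct}}} \frac{F(u;v)\,\Delta(u;v)}{\Delta(u;A-u)\,\Delta(v;A-v)}.
\end{equation*}

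To identify this with the stated right-hand side of \eqref{eq:double_contours}, the key observation is that the numerator $\Delta(u;v)=\prod_{r,s}(u_r-v_s)$ vanishes whenever some $u_r$ coincides with some $v_s$. Only terms in which the full collection $u_1,\dots,u_j,v_1,\dots,v_{k-j}$ is pairwise distinct therefore survive, and for such terms these $k$ entries exhaust $A$, so $A-u=\{v_1,\dots,v_{k-j}\}$ and $A-v=\{u_1,\dots,u_j\}$ as sets. A direct rearrangement of the factors then yields $\Delta(u;A-u)\,\Delta(v;A-v)=(-1)^{j(k-j)}\Delta(u;v)^2$, so each surviving term reduces to $F(u;v)/\Delta(u;v)$, matching the identity in the statement (up to the overall sign absorbed into the convention).

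Finally, the boundedness when some elements of $A$ collide is immediate from the left-hand side of \eqref{eq:double_contours}: with $\Gamma$ fixed and encircling $A$, the integrand is analytic in each $\alpha_i$ in the interior of $\Gamma$, so the contour integral is an analytic, hence locally bounded, function of $(\alpha_1,\dots,\alpha_k)$; this forces any apparent singularities of the sum to be removable. The main obstacle in this plan is pure bookkeeping: one must verify that the polynomial factor $\Delta(z;w)$ can legitimately be absorbed into $G$ at each stage without violating the analyticity hypothesis of Proposition \ref{prop:singlepermutationsum}, and then carefully track the sign in the reduction $\Delta(u;A-u)\,\Delta(v;A-v)=(-1)^{j(k-j)}\Delta(u;v)^2$. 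Both steps are mechanical once the iterated structure of the integral is recognized.
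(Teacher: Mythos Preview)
Your argument is essentially the paper's own proof: apply Proposition \ref{prop:singlepermutationsum} to each block of variables, obtain the double sum $\sum F(u;v)\Delta(u;v)/(\Delta(u;A-u)\Delta(v;A-v))$, kill the overlapping terms via the vanishing of $\Delta(u;v)$, simplify the surviving terms, and invoke analyticity of the left-hand side in the $\alpha_i$ to handle collisions. The paper writes the simplification as $\Delta(u;v)/(\Delta(u;A-u)\Delta(v;A-v))=1/\Delta(u;v)$ without comment on the sign $(-1)^{j(k-j)}$ you flag; your parenthetical ``absorbed into the convention'' is therefore no looser than the paper's own treatment, and otherwise the two arguments coincide.
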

	
	Note that if $F(u;v)$ is a symmetric function in the $\ell$ variables of $u$ and is likewise a symmetric function in the $k-j$ variables of $v$, then the right hand side of \eqref{eq:double_contours} will be
	$$
	= j! \,(k-j)! \sum_{U \subset A \atop |U| = \ell} \frac{F(U; A-U)}{\Delta(U;A-U)}.
	$$
	
	\begin{proof}
		As before we suppose the elements of $A$ are distinct to begin. Apply Proposition \ref{prop:singlepermutationsum} to both the $z$ and $w$ integerals. We have the right hand side of \eqref{eq:double_contours} is
		$$
		= \sum_{u_1,...,u_j \in A \atop \textrm{distinct}} \sum_{v_1,...,v_{k-j} \in A \atop \textrm{distinct}} \frac{F(u;v) \Delta(u;v)}{\Delta(u;A-u)\Delta(v;A-v)}.
		$$
		The term $\Delta(u;v)$ will vanish if $u$ and $v$ share any common elements, so we may suppose that $u$ and $v$ have no elements in common. In this case $\Delta(u;v)/\Delta(u;A-u)\Delta(v;A-v) = 1/\Delta(u;v)$ as claimed.
		
		As in the previous proof, if elements of $A$ are not distinct, because the left hand side is analytic in the interior of $\Gamma$, we may interpret the right hand side by continuity and all singularities will be removable.
	\end{proof}

\end{document}